\newcommand{\C}{\mathbb{C}}
\newcommand{\R}{\mathbb{R}}
\newcommand{\N}{\mathbb{N}}
\newcommand{\Om}{\Omega}
\newcommand{\norm}[1]{\left\lVert#1\right\rVert}
\newcommand{\LO}{L^2(\Omega)}
\newcommand{\wt}{\widetilde}
\newcommand{\I}{\item[\normalfont(i)]}
\newcommand{\II}{\item[\normalfont(ii)]}
\newcommand{\conja}{\overline{a}}
\newcommand{\conjg}{\overline{\gamma}}
\newcommand{\qt}{\hat{q}}
\DeclarePairedDelimiterX{\Norm}[1]{\lVert}{\rVert}{#1}
\title[Overcoming the order barrier two in splitting methods]{Overcoming the order barrier two in splitting methods when applied to semilinear parabolic problems with non-periodic boundary conditions}
\author[R. Häberli]{\firstname{Ramona} \lastname{Häberli}}
\address{Université de Genève, Switzerland}
\email{ramona.haeberli@unige.ch}
\keywords{High order splitting methods; diffusion-reaction equation; non-homogeneous boundary conditions; order reduction phenomena.}
\subjclass{65M12; 65L04}
\begin{document}

% Abstract.
\begin{abstract}
In general, high order splitting methods suffer from an order reduction phenomena when applied to the time integration of partial differential equations with non-periodic boundary conditions. In the last decade, there were introduced several modifications to prevent the second order Strang splitting method from such a phenomena. In this article, inspired by these recent corrector techniques, we introduce a splitting method of order three for a class of semilinear parabolic problems that avoids order reduction in the context of non-periodic boundary conditions. We give a proof for the third order convergence of the method in a simplified linear setting and confirm the result by numerical experiments. Moreover, we show numerically that the high order convergence persists for an order four variant of a splitting method, and also for a nonlinear source term.
\end{abstract}

% Use the \maketitle command after the abstract
\maketitle

%% Beginning of text

% Example of section
\section{Introduction} \label{intro}

Splitting schemes are a natural approach to integrate numerically in time differential equations~\cite{Hai1, Hun03, McL02}. They allow a seperate treatment of different terms of the original problem and can therefore be implemented more efficiently than traditional time integration methods~\cite{Des01}. The division in several sub-problems enables further the application of different discretization techniques. Moreover, splitting methods can preserve geometric properties such as positivity and invariant sets~\cite{Han12}, which is for instance important in chemical physics~\cite{Bad13} or computational biology~\cite{Ger02}. 

For $\Om \subset \R^d$ a bounded domain with smooth boundary and final time $T > 0$, we consider the following class of semilinear parabolic problems,
\begin{equation}
\begin{split}
\partial_tu(x,t)  &= Du(x,t) +f(x,u(x,t)), \quad x \in \Om,  t \in (0,T],  \\ 
Bu(x,t)& =b(x,t),  \quad x \in \partial\Om, t \in (0,T],\\
u(x,0)&=u_0(x), \quad x \in \Om,
\end{split}
\label{pde}
\end{equation}
where $D$ is a diffusion operator, $f$ a smooth source term and $u_0$ an initial data satisfying the conditions on the boundary of~\eqref{pde}. Here, $B$ denotes appropriate boundary conditions of Dirichlet, Neumann or Robin type and $b(x,t)$ is a smooth function on the boundary $\partial\Om$ of the domain $\Om$.

To solve problem~\eqref{pde} by operator splitting, we integrate in time independently the source equation
\begin{align}\label{f}
\partial_tu = f(u), 
\end{align}
and the diffusion equation
\begin{align}\label{D}
\partial_tu = Du \quad \text{in}\; \Om, \qquad Bu =b \quad \text{on}\; \partial\Om.
\end{align}
We denote by $\phi^f_{\tau}(u_0)$ and $\phi^{D}_{\tau}(u_0)$ the flows of~\eqref{f},~\eqref{D} respectively, with the corresponding time step $\tau>0$ and initial condition $u_0=u(0)$. A general splitting scheme with arbitrary coefficients $\alpha_1,\beta_1,\ldots,\alpha_m,\beta_m$, $m~\geq~1$, is defined by the following formula,
\begin{equation}
u_{n+1}= \phi^{f}_{\alpha_m\tau}\circ \phi^{D}_{\beta_m\tau}\circ \ldots \circ \phi^{f}_{\alpha_1\tau}\circ \phi^{D}_{\beta_1\tau}(u_n),
\label{splitgen}
\end{equation}
where $u_n$ denotes the approximation of $u(t_n)$ at time $t_n = n\tau$. As an example, we note the formal order two Strang splitting scheme
\begin{align} \label{strang}
\phi^{\text{StrangNaiv}}_{\tau} = \phi^f_{\frac{\tau}{2}}\circ\phi^{D}_{\tau}\circ\phi^f_{\frac{\tau}{2}}.
\end{align}
We distinguish between the \emph{formal order} of a method, i.e. the convergence order when applied to non-stiff ODEs and the observed \emph{order} when integrating in time parabolic problems. For general boundary conditions, splitting methods of formal order strictly higher than one suffer in general from an order reduction \cite{Hun03}. This phenomena occurs because the solution $u$ leaves the domain of the operator $D$ after applying the flow of the source term $\phi^f_{\tau}$ and therefore creates a loss of time regularity when applying the diffusion flow over small time steps.
In particular, for homogeneous Dirichlet boundary conditions $u=0$ on $\partial\Om$ and a solution-independent source function $f=f(x)$, which does not vanish at the boundary $\partial\Om$, the Strang splitting method~\eqref{strang} converges with a reduced order between one and two. This behaviour occurs even for exponential methods, which are integrated by means of Krylov algorithms~\cite{Nie12}, where no splitting is considered~\cite{Can18}.

In the last decade, several modifications were introduced to prevent method \eqref{strang} from such an order reduction. In~\cite{Alo19, Alo20}, they show order two convergence for a scheme discretized both in space and time, see also~\cite{Alo17} for Lawson methods and~\cite{Alo21} for arbitrarily high-order splitting methods for semilinear wave problems. Another correction technique to avoid order reduction is the projection of the intermediate solution to the domain of the operator $D$ before applying its flow $\phi^D_{\tau}$~\cite{Ber20, Ein15, Ein16}. While in~\cite{Ein15, Ein16}, one calculates the corrector function from the nonlinear source function $f$, one obtains it directly from the output of the flow $\phi^f_{\frac{\tau}{2}}$ in~\cite{Ber20} and requires therefore no additional evaluation of a possible costly nonlinearity $f$ or diffusion operator $D$. One time step of the modified version of the naive Strang splitting~\eqref{strang} of~\cite{Ber20} is defined by the composition
\begin{align} \label{strangcorr}
\phi^{\text{StrangCorr}}_{\tau} = \phi^f_{\frac{\tau}{2}}\circ\phi^{-q_n}_{\frac{\tau}{2}}\circ\phi^{D+q_n}_{\tau}\circ\phi^{-q_n}_{\frac{\tau}{2}}\circ\phi^f_{\frac{\tau}{2}},
\end{align}
where $\phi^{-q_n}_{\frac{\tau}{2}}$ is the exact flow of  $\partial_tu = -q_n$ and analogously, for $\phi^{D+q_n}_{\tau}$, we consider the modified diffusion problem $\partial_tu = Du + q_n$ with boundary conditions $Bu = b$. The flows for the correctors $q_n$ are projections in the sense of geometric numerical integration (see \cite[Chap.~IV.4]{Hai1}). The corrector itself is the solution of
\begin{equation} \label{q_strang}
Dq_n = 0 \quad \text{in}\; \Om, \qquad  q_n = \frac{2}{\tau}(\phi^f_{\tau/2}(u_n)-u_n) \quad \text{on}\; \partial\Om.
\end{equation}

The goal of the present article is to introduce a third order splitting scheme, which does not suffer from an order reduction when applied to parabolic problems, with analysis in a simplified linear setting. Thereby, inspired by the mentioned corrector techiques for the Strang splitting scheme, we modify a formal three order splitting method of the form~\eqref{splitgen}. However, for formal order strictly higher than two, there exist no such methods with only positive real coefficients $\alpha_i,$ and $\beta_i$ (\cite{Gol96}, see also \cite{Bla05}). This is an issue for non-time-reversible problems such as parabolic equations. Therefore, as proposed independently in~\cite{Cas09} and \cite{Han09b}, we consider splitting schemes with complex coefficients $\alpha_i,\beta_i \in \C$ with positive real part to achieve an order higher than two for the class of problems~\eqref{pde} with periodic boundary conditions. We also mention~\cite{Bla13}, where such splitting methods up to order 16 were constructed.

For the complex coefficient $a=\frac{1}{4}(1-\frac{1}{\sqrt{3}}i)$ and its conjugate $\conja=\frac{1}{4}(1+\frac{1}{\sqrt{3}}i)$, %which fulfill the order conditions $a + \conja = 1$ and $a^3+\conja^3 = 0$, 
we consider the composition scheme
\begin{align}
\phi^{\text{C3Naiv}}_{\tau} = \phi^{\text{StrangNaiv}}_{2\conja\tau}\circ \phi^{\text{StrangNaiv}}_{2a\tau}
\label{c3comp}
\end{align}
of formal order three \cite{Suz90, Yos90}, see also \cite[Theorem~II.4.1]{Hai1}, where $\phi^{\text{StrangNaiv}}_{\tau}$ is one step of the Strang splitting method~\eqref{strang}. Note that one can rewrite~\eqref{c3comp} as a splitting method
\begin{align}
\phi^{\text{C3Naiv}}_{\tau} =\phi^f_{\conja\tau}\circ\phi^{D}_{2\conja\tau}\circ\phi^f_{c\tau}\circ\phi^{D}_{2a\tau}\circ\phi^f_{a\tau},
\label{c3}
\end{align}
with $c=\frac{1}{2}$.
Numerical experiments show that method~\eqref{c3} converges in general with a reduced order one when applied to the class of parabolic equations~\eqref{pde} with a general source term $f$. In particular, for homogeneous boundary conditions $b=0$ and a solution-independent source function, which vanishes at the boundary $\partial\Om$, it converges in general with order two and therefore, suffers from an order reduction, in contrast to the order two method \eqref{strang}, which avoids order reduction~\cite{Fao15}.

Since method~\eqref{c3} can be written as the composition~\eqref{c3comp}, an easy but naive choice for a modification would be a composition scheme of the form
\begin{align} \label{c3corrcomp}
\phi^{\text{C3Naiv2}}_{\tau} = \phi^{\text{StrangCorr}}_{2\conja\tau}\circ \phi^{\text{StrangCorr}}_{2a\tau}
\end{align}
of the corrected Strang methods, $\phi^{\text{StrangCorr}}_{\tau}$, given in \cite{Ber20} or \cite{Ein15, Ein16}. However, we observe numerically that method~\eqref{c3corrcomp} is only of order two when applied to the class of equations~\eqref{pde} and therefore does not improve from order two to three.

To avoid order reduction, we shall show that it is not enough to justify that $u$ stays in the domain of $D$.
In the present article, we introduce the corrected order three splitting method defined by
\begin{align} \label{c3corr}
\phi^{\text{C3New}}_{\tau} =\phi^f_{\conja\tau}\circ\phi^{-q^{(2)}_n}_{\conja\tau}\circ\phi^{D+q^{(2)}_n}_{2\conja\tau}\circ\phi^{-q^{(2)}_n}_{\conja\tau}\circ\phi^f_{c\tau}\circ\phi^{-q^{(1)}_n}_{a\tau}\circ\phi^{D+q^{(1)}_n}_{2a\tau}\circ\phi^{-q^{(1)}_n}_{a\tau}\circ\phi^f_{a\tau},
\end{align}
which involves two corrector functions $q^{(1)}_n$ and $q^{(2)}_n$. The corresponding flows $\phi^{-q^{(1)}_n}_{a\tau}$ and $\phi^{-q^{(2)}_n}_{\conja\tau}$ modify the respective intermediate numerical solutions $u$ and $Du$ such that they stay in the domain of $D$ before applying the flows of the diffusion operator. We prove that the numerical method $u_{n+1}=\phi^{\text{C3New}}_{\tau}(u_n)$, given by the splitting \eqref{c3corr}, fulfills in the case of a solution-independent source term the third order convergence estimate
\begin{equation} \label{main_ineq}
\norm{u_n-u(t_n)}_{\LO} \leq C\tau^3(1+|\log(\tau)|) \quad \text{for all}\; 0 \leq t_n \leq T,
\end{equation}
for all $\tau >0$ and $C$ a constant, which is independent on $\tau$ and $n$.

The mentioned order reduction phenomena we observe for splitting methods of formal order two and three also appear for higher order splitting schemes. For the complex coefficient $\gamma = \frac{1}{2}(1+\sqrt{3+2\sqrt{2}}i)$ and its conjugate $\conjg$, we consider the composition scheme
\begin{align} \label{c4}
\phi^{\text{C4Naiv2}}_{\tau} = \phi^{\text{C3New}}_{\conjg\tau}\circ\phi^{\text{C3New}}_{\gamma\tau},
\end{align}
which is of formal order four \cite[Theorem~II.4.1]{Hai1}, but converges in general with reduced order strictly smaller than four. Inspired by method~\eqref{c3corr}, we shall modify this naive scheme and show numerically fourth order convergence of the new variant when applied to the class of equation~\eqref{pde}.

The outline is the following. In Section~\ref{Af}, we introduce the analytical framework, which is used to give a convergence analysis of the splitting method~\eqref{c3corr}. In Section~\ref{method}, we define the two corrector functions $q^{(1)}_n$ and $q^{(2)}_n$ and explain in detail the construction of the corrected splitting \eqref{c3corr}. In Section~\ref{conv_anal}, we prove the third order estimate~\eqref{main_ineq} in a linear setting. In Section~\ref{num_exp}, we confirm the results from the convergence analysis by the illustration of two linear examples. Moreover, we show that the high order convergence seems to persist for an order four variant of a splitting scheme, and also for a nonlinear source term and solution-dependent boundary conditions.

\section{Framework on analytic semigroups} \label{Af}

In this section, we describe the analytical framework that we use in this article. Thereby, we follow the notation of \cite[Chap.~3]{Lun95} and \cite{Ber20}.
Let $\Om \subset \R^d$ be a bounded domain 
with $C^2$-boundary $\partial\Om$ or a polyhedral, convex domain and $T > 0$. We consider the class of semilinear parabolic equations \eqref{pde}.
Here, $D$ is a second order differential operator, defined by
\begin{equation} \label{D_formula}
D = \sum_{i,j = 1}^d a_{ij}(x)\partial_{ij} +\sum_{i= 1}^d b_{i}(x)\partial_{i} + c(x)I,
\end{equation}
where the matrix $(a_{ij}(x)) \in \R^{d\times d}$ is assumed to be symmetric and, for some $\lambda >0$ and for all $x \in \Om$, satisfies the uniform elliptic condition
$$
\sum_{i,j = 1}^d a_{ij}(x)\xi_i\xi_j \geq \lambda |\xi|^2, \quad \xi \in \R^d.
$$
For simplicity, we assume that $a_{ij} \in C^1(\overline{\Om})$ and $b_i, c$ uniformly continuous in $\Om$.
Furthermore, $B$ denotes boundary conditions $B = \gamma(x)I$ of type Dirichlet or alternatively,
$$
B = \sum_{i= 1}^d \beta_{i}(x)\partial_{i} + \gamma(x)I \qquad \text{with} \qquad \inf_{x \in \partial\Om}\Bigl|\sum_{i= 1}^d \beta_{i}(x)\nu_i(x)\Bigr|>0
$$ 
of type Neumann ($\gamma =0$), Robin respectively. We assume continuous differentiability for the coefficients $\beta_i, \gamma$ and we denote by $\nu(x)$ the exterior unit normal vector at $x\in \partial\Om$. 

For simplicity, we proceed our analysis in the Hilbert space $\LO$ and thus, unless otherwise specified, denote by $\norm{\,\cdot\,}$ the $\LO$ norm, as well as the associated Hilbert-Schmidt operator norm. 
We adopt the following notation for $v \in \LO$ from~\cite{Ber20}: For $k =0, 1, \ldots$ we write
$$
v = \mathcal{O}(\tau^k)\quad \text{if} \quad \norm{v}_{\LO} \leq C\tau^k,
$$
for some $C>0$ independent on $\tau >0$ assumed small enough. Throughout this article we denote by $C$ a positive constant independent on $\tau$ and $n$ but not necessarly the same at different passages.

We assume the source function to be twice continuously differentiable in a neighbourhood $U \subset~H^2(\Om)$ of the exact solution $u$ of~\eqref{pde}, $f \in C^2(U,H^2(\Om))$, that includes the case $f=f(x)$, which is considered in the convergence analyis, or a Nemytskii operator $f=f(x,u(x,t))$. Additionally, we consider heterogeneous, continuously differentiable boundary conditions $b \in C^1([0,T],H^2(\partial \Om))$. In this case, it is convenient for the analysis to reformulate problem~\eqref{pde}, such that we obtain an equation with zero boundary conditions. We follow the construction made in \cite{Ber20,Ein16}. Let $z \in C^1([0,T],H^2(\Om))$ with boundary conditions $Bz=b$ on $\partial\Om$ and initial condition $z(0)=z_0$. Define $\wt{u}=u-z$ which satisfies
\begin{equation} \label{pdez}
\partial_t\wt{u} = D\wt{u}+ f(\wt{u}+z) + Dz-\partial_tz \quad \text{in}\; \Om, \qquad  B\wt{u} =0\quad \text{on}\; \partial\Om, \qquad \wt{u}(0)=u_0-z_0.
\end{equation}
We emphasize that this lifting methodology, which allows to come back to a problem with homogenous boundary conditions, is only used for the analysis, not for implementations.
We define the linear operator $A$ with homogenous boundary conditions $Bu=0$ as
\begin{align}
Av= Dv \quad \forall v \in D(A)=\{u \in H^2(\Om): Bu = 0 \;\text{on}\ \partial\Om\}. \label{A}
\end{align}
By \cite[pages 91, 92]{Tho06}, $A$ is a densely defined linear operator on the Banach space $\LO$ such that the resolvent set $\rho(A)$ contains the closure of the set $\Sigma_{\theta} :=\{ z \in \C : z \neq 0, |\arg(z)| \leq \pi-\theta\}$,
for some $\theta \in (0,\frac{\pi}{2})$. Furthermore, for all $z \in  \overline{\Sigma}_{\theta}$, let 
\begin{align*}
\norm{(zI-A)^{-1}} \leq \frac{M}{|z|},
\end{align*}
for some constant $M > 0$. Under these assumptions, $-A$ is a sectorial operator and $A$ is the infinitesimal generator of the analytic semigroup $e^{tA}$, represented by
\begin{align} \label{etA}
e^{tA} = \frac{1}{2\pi i}\int_{\Gamma} e^{zt}R(z,A)\;\text{dz},
\end{align}
where $R(z,A)$ is the residual of $A$ and $\Gamma$ is the boundary of $\Sigma_{\theta}$ with imaginary part increasing along $\Gamma$. 
Since $A$ is a symmetric elliptic operator defined on $D(A)$, all its eigenvalues $\{\lambda_j\}_{j=1}^{\infty}$ are in $(-\infty, 0]$. Furthermore, the corresponding eigenfunctions $\{e_j\}_{j=1}^{\infty}$ build an orthonormal basis of $L^2(\Om)$ \cite[Theorem~6.5.1]{Eva97}. Due to the fact that $L^2(\Om)$ is a Hilbert space, for $v =\sum_{j \geq 1} v_je_j$, we have $\norm{v}^2 =~\sum_{j \geq 1} |v_j|^2$ by the equality of Parseval.

Furthermore, for all $\alpha \geq 0$, the operator $A$ fulfills the parabolic smoothing property (see e.g~\cite[Theorem~1.3.3]{Lor05})
\begin{align} \label{psp}
\norm{(-A)^{\alpha}e^{tA}} \leq \frac{C}{t^{\alpha}}, \quad t>0.
\end{align}
For $\alpha=0$, we have that $e^{tA}$ is bounded. Additionally, for $k \geq 1$, $A^ke^{tA}v$ is bounded for any $v \in D(A^k)$ \cite[Proposition~2.1.1]{Lun95}, where
\begin{align*}
D(A^k)=\{u \in H^{2k}(\Om): Bu =  BAu = \ldots = BA^{k-1}u = 0 \;\text{on}\ \partial\Om\}. 
\end{align*}
If we multiply $A$ by a constant $\lambda \in \C$ with $\Re(\lambda)>0$, \eqref{psp} remains true. Consider the more general version in form of~\cite[Proposition~5.1.1]{Lor05}. They use the property $\sup\{\Re(z): z \in \sigma(A)\} \leq 0$, which does not change for $\lambda A$. Therefore, also $e^{\lambda tA}$ is a bounded operator. Moreover, the definition~\eqref{etA} requires that for given initial condition $u_0$, problem~\eqref{D} can be integrated in time along the complex line $t=\lambda t'$ whenever $t'>0$ is small enough, see~\cite{Cas09, Han09b} in the context of periodic boundary conditions. Thus, $z$ as defined in \eqref{pdez} can be interpretated as the solution of the differential equation $\frac{d}{dt} e^{t A}y = Ae^{tA}y$. In the context of complex time steps, we assume $b$ and $z$ in \eqref{pdez} to be complex analytic in an open complex neighbourhood of $[0,T]$ with values in $H^2(\Om)$, and we assume the time step $\tau$ in \eqref{c3corr} to be small enough. Note that, if $b$ is time-independent, one can simply choose $z \in H^2(\Om)$ also time-independent, and we have no such restriction on the time stepsize. For $n=0,\ldots,N, N \in \N$, we set $z_n(s) = z(t_n+s)$. 

The exact solution of~\eqref{pde} can be expressed in terms of the analytic semigroup $e^{tA}$~\eqref{etA},
\begin{align*}
u(t_{n+1}) = z_n(\tau) + e^{\tau A}(u(t_n)-z_n(0)) + \int^{\tau}_0 e^{(\tau-s) A}(f(u(t_n+s))+Dz_n(s)-\partial_t z_n(s))\;\text{ds},
\end{align*}
and we assume $u \in C^2([0,T],H^2(\Om))$.

For the analysis, we consider linear problems with solution-independent source term $f=f(x)$. In the case of homogenous boundary conditions, the diffusion operator $D$ as in~\eqref{D_formula} corresponds to $A$, and we can reformulate the general problem~\eqref{pde} into
\begin{equation} \label{pdelin}
\begin{split}
\partial_tu(x,t)  &= Au(x,t) + f(x),\quad x \in \Om,  t \in (0,T],  \\ 
Bu(x,t)& =0,  \quad x \in \partial\Om, t \in (0,T],  \\
u(x,0)&=u_0(x), \quad x \in \Om.
\end{split}
\end{equation}
The exact solution $u$ of~\eqref{pdelin} is given by
\begin{align}
u(t_{n+1}) = e^{\tau A}u(t_n) + (e^{\tau A}-I)A^{-1}f.
\label{solex}
\end{align}
We consider an initial condition $u_0 \in D(A)$ and we assume that the source function satisfies the regularity
\begin{equation} \label{f_cond}
f\in H^4(\Om).
\end{equation} 
Precisely,~\cite[Theorem 4.6.3]{Eva97} states that if $g \in \LO$ and $v \in H^1_0(\Om)$ is the solution of the elliptic boundary-value problem
\begin{equation} \label{prob_ell}
Dv  = g\quad \text{in}\; \Om,   \qquad
Bv =0  \quad \text{on}\; \partial\Om,
\end{equation}
on a bounded domain $\Om$ with $C^2$-boundary, then $v \in H^2(\Om)$. Furthermore,  
\begin{equation} \label{prob_ell_ineq}
\norm{v}_{H^2(\Om)} \leq C \norm{g}_{\LO},
\end{equation}
where the constant $C$ depends only on $\Om$ and the coefficients of $D$. We refer to \cite[Theorem~2.1]{Nec67} for polyhedral, convex domains. We note that in the context of the corrected Strang splitting methods, one assumes $f \in H^2(\Om)$ for the same purpose \cite{Ber20, Ein15, Ein16}. 

\section{New order three splitting method} \label{method}

The goal of this section is to define the new splitting method~\eqref{c3}, which does not suffer from an order reduction when applied to the class of problems~\eqref{pde}. Thereby, before applying the flow $\phi^{D}_{\tau}$ of the diffusion operator $D$, we project the intermediate solutions, such that $u$ and also $Du$ are in the domain of $D$.
For $j=1, 2$ and $n=0,\ldots,N$, $N \in \N$, we introduce time steps 
$\tau_1 = a\tau$ and $\tau_2 = \conja\tau$
and intermediate solutions
$$
\omega^{(1)}_n = u_n \quad \text{and} \quad \omega^{(2)}_n =\phi^f_{c\tau}\circ\phi^{-q^{(1)}_n}_{a\tau}\circ\phi^{D_n+q^{(1)}_n}_{2a\tau}\circ\phi^{-q^{(1)}_n}_{a\tau}\circ\phi^f_{a\tau}(u_{n}).
$$
Then, we define functions $r^{(j)}_{n}$, for which we introduce the following condition on the boundary,
\begin{equation} \label{r}
B\biggl(r^{(j)}_n -\frac{D\phi^f_{\tau_j}(\omega^{(j)}_n)-D\omega^{(j)}_n}{\tau_j}\biggr) = 0 \quad \text{on}\; \partial\Om.
\end{equation}
In the interior of $\Om$, we choose $r^{(j)}_n$ such that 
\begin{equation} \label{rint}
Dr^{(j)}_n = 0. 
\end{equation}
Furthermore, for any $n =0,\ldots,N$, we define $q^{(j)}_n$ by the solution of the elliptic problem 
\begin{equation}\label{qint}
Dq^{(j)}_n = r^{(j)}_n \quad \text{in}\; \Om,
\end{equation} 
with boundary conditions
\begin{equation} \label{q}
B\biggl(q^{(j)}_n - \frac{\phi^f_{\tau_j}(\omega^{(j)}_n)-\omega^{(j)}_n}{\tau_j}\biggr)=0 \quad \text{on}\; \partial\Om.
\end{equation}
Actually, equation~\eqref{rint} with boundary conditions~\eqref{r}, as well as equation~\eqref{qint} with boundary conditions~\eqref{q} have unique solutions $r^{(j)}_n, q^{(j)}_n~\in~\LO$ \cite[Chap.~6]{Eva97}. 
Then, we define the corrected version of the splitting method~\eqref{c3} by
\begin{align*}
\phi^{\text{C3New}}_{\tau}(u_n) =\phi^f_{\conja\tau}\circ\phi^{-q^{(2)}_n}_{\conja\tau}\circ\phi^{D+q^{(2)}_n}_{2\conja\tau}\circ\phi^{-q^{(2)}_n}_{\conja\tau}\circ\phi^f_{c\tau}\circ\phi^{-q^{(1)}_n}_{a\tau}\circ\phi^{D+q^{(1)}_n}_{2a\tau}\circ\phi^{-q^{(1)}_n}_{a\tau}\circ\phi^f_{a\tau}(u_{n}).
\end{align*}
For $j=1, 2$, $\phi^{-q^{(j)}_n}_{\tau_j}(u^{(j)}_0)=u^{(j)}_0-\tau_j q^{(j)}_n$  is the exact flow of $\partial_tu^{(j)} = -q^{(j)}_n$, with the respective initial condition $u^{(j)}_0$ and time step $\tau_j$. We note that the corrector functions $q^{(1)}_n$ and $q^{(2)}_n$ do not coincide in general.
\begin{remark} ~\label{rem_qr} For a solution-independent source term $f$, the boundary conditions \eqref{r} and \eqref{q} turn into
\begin{align} \label{bc_qr}
B(r^{(j)}_n - Df)=0 \qquad \text{and} \qquad B(q^{(j)}_n-f)=0.
\end{align}
Additionally, the corrector functions are not anymore dependent on $n$. We hence denote $q=q^{(1)}_n = q^{(2)}_n$ and $r=r^{(1)}_n = r^{(2)}_n$. In the general case $f=f(u)$, the introduced conditions~\eqref{bc_qr} approximate~\eqref{r} and~\eqref{q}. Precisely, for $j=1, 2$, considering the second order approximation 
$$
\phi^f_{\tau_j}(\omega^{(j)}_n)=\omega^{(j)}_n+\tau_j f(\omega^{(j)}_n) + O(\tau_j^2),
$$
we observe that $Bq^{(j)}_n = \frac{1}{\tau_j}(\phi^f_{\tau_j}(\omega^{(j)}_n)-\omega^{(j)}_n) \approx Bf$ and similarly for $Br^{(j)}_n$.
\end{remark}
\begin{remark} \label{impl_qr}
In~\eqref{q_strang}, the choice $Dq_n = 0$ was made for simplicity. In the interior of the domain $\Om$, there are other possibilities to define the corrector $q_n$ at a reduced cost, as shown in~\cite{Ein18} in the context of corrected Strang splitting methods. Precisely, considering $\Om \subset \R^d$, solving~\eqref{q_strang} can be seen as a problem with $d-1$ degrees of freedom. Equivalently, one can remplace the operator $D$ in~\eqref{rint}-\eqref{qint} as long as assumptions~\eqref{prob_ell}-\eqref{prob_ell_ineq} are satisfied. Additionally, we approximate $u_n \approx b$ on $\partial\Om$, which avoids to calculate $D\omega^{(j)}_n$, see~\cite{Ber20} again in the context of the Strang splitting scheme. We refer to Remarks~\ref{rem_numf} and~\ref{rem_num} for more details.
\end{remark}
\begin{remark} \label{rem:c4new}
In Section~\ref{num_exp}, we show numerically that the introduced correction techniques for order three can be generalized to construct a splitting method of order four. Inspired by the definition of method~\eqref{c3corr}, we modifiy the splitting scheme~\eqref{c4} of formal order four, such that it converges numerically with full order four when applied to parabolic problems of type~\eqref{pde}. The new splitting method writes
\begin{equation} \label{c4corr}
\begin{split}
\phi^{\text{C4New}}_{\tau} = &\phi^f_{\conjg\conja\tau}\circ\phi^{-\qt^{(4)}_n}_{\conjg\conja\tau}\circ\phi^{D+\qt^{(4)}_n}_{2\conjg\conja\tau}\circ\phi^{-\qt^{(4)}_n}_{\conjg\conja\tau}\circ\phi^f_{\conjg c\tau}\circ\phi^{-\qt^{(3)}_n}_{\conjg a\tau}\circ\phi^{D+\qt^{(3)}_n}_{2\conjg a\tau}\circ\phi^{-\qt^{(3)}_n}_{\conjg a\tau}\circ\phi^f_{\conjg a\tau}\\
 &\phi^f_{\gamma\conja\tau}\circ\phi^{-\qt^{(2)}_n}_{\gamma\conja\tau}\circ\phi^{D+\qt^{(2)}_n}_{2\gamma\conja\tau}\circ\phi^{-\qt^{(2)}_n}_{\gamma\conja\tau}\circ\phi^f_{\gamma c\tau}\circ\phi^{-\qt^{(1)}_n}_{\gamma a\tau}\circ\phi^{D+\qt^{(1)}_n}_{2\gamma a\tau}\circ\phi^{-\qt^{(1)}_n}_{\gamma a\tau}\circ\phi^f_{\gamma a\tau},
\end{split}
\end{equation}
where we replace the corrector functions $q^{(i)}$ defined in~\eqref{r}-\eqref{q} by new correctors $\qt^{(i)}, i= 1, 2, 3, 4$. The corresponding flows of these correctors modify the respective intermediate numerical solutions such that not only $u$ and $Du$, but also $D^2u$ and $D^3u$ stay in the domain of the diffusion operator $D$ before applying its flow $\phi^D_{\tau}$. Note that for a solution-independent source term $f$, the correctors $\qt^{(i)}$ coincide for all $i= 1, 2, 3, 4$, and we have only one corrector, analogously to method~\eqref{c3corr}, see Remark~\ref{rem_qr}.
\end{remark}

\section{Order three error estimates for the splitting method C3New} \label{conv_anal}
In this section, we prove that the splitting method~\eqref{c3corr} is convergent of global order three without order reduction when applied to problem~\eqref{pde} with solution-independent source term $f=f(x)$, as in~\eqref{f_cond}.
%%%%%%%%%%%%	Main Theorem	%%%%%%%%%
\begin{theorem} \label{globalerr_thm}
Consider problem~\eqref{pde} under the assumptions of Section~\ref{Af} with source function $f~\in~H^4(\Om)$. Then, the splitting method~\eqref{c3corr} satisfies the error estimate~\eqref{main_ineq},
\begin{equation*}
\norm{u_n-u(t_n)} \leq C\tau^3(1+|\log(\tau)|) \quad \text{for}\; 0 \leq t_n \leq T,
\end{equation*}
for all $\tau >0$ and $C$ a constant, which is independent on $\tau$ and $n$.
\end{theorem}
 We introduce the following hypothesis on the corrector function $q$.\\
\textbf{Hypothesis  on $q$.} For $A$ given in \eqref{A}, we define the two hypothesis,
\begin{description}
\item[\normalfont(H1)] \quad $f-q \in D(A)$ and $A(f-q) = \mathcal{O}(1)$,
\item[\normalfont(H2)] \quad $A(f-q) \in D(A)$ and $A^2(f-q) = \mathcal{O}(1)$.
\end{description}
For the corrector functions $q$ and $r$, defined as in Section~\ref{method}, we show that $f-q$ fulfills the hypothesis (H1) and (H2).
%%%%%%%%%%%%%%%%	f-q	%%%%%%%%%%%%	
\begin{lemma} \label{lemma_fq} If $q$ fulfills the properties \eqref{r}$-$\eqref{q}, then \emph{(H1)} and \emph{(H2)} hold true. 
\end{lemma}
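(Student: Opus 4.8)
The plan is to exploit the simplified characterization of the correctors in the solution-independent case (Remark~\ref{rem_qr}), where $q$ and $r$ no longer depend on $n$ and satisfy $Dq = r$ and $Dr = 0$ in $\Om$, together with the boundary identities $B(q-f) = 0$ and $B(r - Df) = 0$ on $\partial\Om$. The whole argument then reduces to two applications of $A$ to $f - q$, once I secure the $H^2$-regularity of $r$ and $q$ from elliptic regularity.

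First I would establish $r \in H^2(\Om)$. Since $Dr = 0$ in $\Om$ with $Br = B(Df)$ on $\partial\Om$, I would lift the inhomogeneous datum by subtracting $Df$: the difference $r - Df$ solves $D(r - Df) = -D^2 f$ with homogeneous boundary condition $B(r - Df) = 0$. As $f \in H^4(\Om)$ gives $D^2 f \in \LO$, the elliptic regularity result quoted in Section~\ref{Af} yields $r - Df \in H^2(\Om)$, hence $r \in H^2(\Om)$; the analogous lifting of $q - f$, which solves $D(q-f) = r - Df \in \LO$ with $B(q-f)=0$, gives $q \in H^2(\Om)$. For (H1) I then observe that $B(f-q) = 0$ together with $f - q \in H^2(\Om)$ places $f - q \in D(A)$, and $A(f-q) = D(f-q) = Df - r \in \LO$ is independent of $\tau$ and $n$, so $A(f-q) = \mathcal{O}(1)$.

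For (H2) the key point is that $A(f-q) = Df - r$ is itself in $D(A)$: its boundary trace satisfies $B(Df - r) = B(Df) - Br = 0$ by the condition on $r$, and $Df - r \in H^2(\Om)$ by the regularity just obtained. Applying $A = D$ a second time and using $Dr = 0$ collapses the expression to $A^2(f-q) = D(Df - r) = D^2 f$, which lies in $\LO$ and is $\tau$- and $n$-independent, whence $A^2(f-q) = \mathcal{O}(1)$.

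The main obstacle is the regularity bookkeeping, and it is precisely here that the hypothesis $f \in H^4(\Om)$ rather than merely $f \in H^2(\Om)$ is needed: one must check that $D^2 f \in \LO$, which is exactly what two applications of the second-order operator $D$ to an $H^4$-function supply. The clean algebraic identity $A^2(f-q) = D^2 f$ is what ultimately encodes the design goal from Section~\ref{method} of keeping both $u$ and $Du$ in the domain of $D$ before each diffusion flow.
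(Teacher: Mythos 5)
Your proof is correct and follows essentially the same route as the paper: both arguments reduce $Df-r$ and $f-q$ to solutions of the homogeneous elliptic problem~\eqref{prob_ell} with right-hand sides $D^2f$ and $Df-r$ respectively, invoke the quoted elliptic regularity to place them in $H^2(\Om)$ and hence in $D(A)$, and then use $Dq=r$, $Dr=0$ to collapse $A^2(f-q)$ to $D^2f$. The only cosmetic difference is that you phrase the $\mathcal{O}(1)$ bounds via $\tau$- and $n$-independence while the paper records the explicit estimates $\norm{A(f-q)}\leq C\norm{D^2f}$ and $\norm{A^2(f-q)}=\norm{D^2f}$.
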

\begin{proof} By Remark~\ref{rem_qr}, we have boundary conditions~\eqref{bc_qr} for the corrector functions $r$ and $q$. Therefore, $v_1=Df-r \in H^2(\Om)$, and thus also $v_2 = f-q \in H^2(\Om)$ are solutions of the elliptic problem~\eqref{prob_ell}, with right hand sides $g_1=D^2f$ and $g_2=v_1$. We deduce that $Df-r, f-q \in D(A)$.
Furthermore, 
$$
A(f-q)=D(f-q)=Df-Dq=Df-r,
$$ and thus, $A(f-q) \in D(A)$. Additionally, using the $H^2$-regularity estimate~\eqref{prob_ell_ineq} for elliptic problems, there holds 
$$
\norm{A(f-q)}
\leq C \norm{D^2f} \quad \text{and} \quad  \norm{A^2(f-q)}
= \norm{D^2f},
$$ 
which are the desired estimates to satisfy the hypothesis (H1) and (H2).
\end{proof}
%%%%%%%%%%%%	Local error	%%%%%%%%%%%%%%%
We are now in position to analyze the local error defined as $\delta_{n+1} = \phi^{\text{C3New}}_{\tau}(u(t_n))-u(t_{n+1})$ of the new splitting method~\eqref{c3corr}.
\begin{lemma} \label{localerr_lemma} Under the assumptions of Theorem~\ref{globalerr_thm}, the local error $\delta_{n+1}$ of the splitting method~\eqref{c3corr} fulfills
$$
\delta_{n+1} = \tau S(\tau A)(f-q),
$$
with $S(z) = ae^{z} + \frac{1}{2}e^{2\conja z} +\conja -z^{-1}(e^{z}-1)$ for $z \in \C$.
\end{lemma}
\begin{proof}
Firstly, we consider homogeneous boundary conditions $b=0$, i.e. the local error $\delta_n^0$ of problem~\eqref{pdelin}. In this case, a straightforward calculation shows that the numerical flow $\phi^{\text{C3New}}_{\tau}$ of method~\eqref{c3corr} is given by
\begin{align} \label{solnum}
\phi^{\text{C3New}}_{\tau}(u(t_n))
&= e^{\tau A}u(t_n) + \tau (ae^{\tau A}+\frac{1}{2}e^{2\conja\tau A}+\conja I)(f-q) +(e^{\tau A}-I)A^{-1}q.
\end{align}
From the formulas of the exact solution \eqref{solex} and the numerical solution \eqref{solnum}, we deduce
\begin{align} \label{localerr}
\delta^0_{n+1}
&=\tau\bigl(ae^{\tau A} + \frac{1}{2}e^{2\conja\tau A} +\conja -(\tau A)^{-1}(e^{\tau A}-I)\bigr)(f-q)
= \tau S(\tau A)(f-q).
\end{align}
In the context of general boundary conditions as introduced in Section~\ref{Af}, we obtain for the local error
\begin{align} \label{localerrb}
\begin{split}
\delta_{n+1}
&=\delta^0_{n+1} + z_n(\tau)-e^{2\conja\tau A}z_n(2a\tau)+e^{2\conja\tau A}z_n(2a\tau)-e^{\tau A}z_n(0)-z_n(\tau)+e^{\tau A}z_n(0)\\
&+ e^{2\conja\tau A}\int^{2a\tau}_0 e^{(2a\tau-s) A}r^{D,z}_n(s)\;\text{ds} + \int^{\tau}_{2a\tau} e^{(\tau-s) A}r^{D,z}_n(s)\;\text{ds} - \int^{\tau}_0 e^{(\tau-s) A}r^{D,z}_n(s)\;\text{ds} 
=\delta^0_{n+1},
\end{split}
\end{align}
where we denote $r^{D,z}_n(s) = Dz_n(s)-\partial_t z_n(s)$.
We have the same local error as in the homogeneous case $b=0$, for which we have the desired representation~\eqref{localerr}. This concludes the proof.
\end{proof}
To obtain an estimate for the local error~\eqref{localerrb}, we need the following lemma.
%%%%%%%%%%%	S(z)	%%%%%%%%%%%%%%%%%
\begin{lemma} \label{lemma_S} The mapping $S(z)$, defined in Lemma~\ref{localerr_lemma}, fulfills the following inequalities for all $z~\in~\R$.
\begin{description}
\I \quad For $z \geq -1$,
$
|S(z)| \leq |z|^3e^z.
$
\II \quad For $z \leq-1$,
$
|S(z)| \leq \sqrt{3/2}|z|^3. 
$
\end{description}
\end{lemma}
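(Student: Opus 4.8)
The plan is to exploit the fact that $S$ is the error function of a formally third-order method, so that it vanishes to order three at the origin. Concretely, I would first record the order conditions encoded in $a=\tfrac14(1-\tfrac{1}{\sqrt3}i)$, namely $a+\conja=\tfrac12$ and $\tfrac{a}{2}+\conja^2=\tfrac16$, and from a short Taylor expansion deduce $S(0)=S'(0)=S''(0)=0$. The one analytic subtlety is the term $z^{-1}(e^{z}-1)$: I would rewrite it as $\int_0^1 e^{tz}\,dt$, which is entire and differentiates cleanly. Thus $S$ extends to an entire function to which Taylor's theorem applies, and the third-order vanishing yields the integral remainder
$$
S(z)=\frac{z^3}{2}\int_0^1 (1-t)^2\,S'''(tz)\,dt,\qquad\text{so that}\qquad |S(z)|\le \frac{|z|^3}{6}\sup_{t\in[0,1]}|S'''(tz)|,
$$
using $\int_0^1(1-t)^2\,dt=\tfrac13$. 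Both claimed inequalities will then follow from a single uniform bound on $S'''$ together with the size of $|z|^3$.

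Next I would compute $S'''(z)=a e^{z}+4\conja^{\,3}e^{2\conja z}-\int_0^1 t^3 e^{tz}\,dt$ and estimate it on the real line. For $w\le 0$ each exponential has modulus at most $1$, giving $|S'''(w)|\le K:=|a|+4|\conja|^3+\tfrac14=\tfrac{2}{3\sqrt3}+\tfrac14$; for $w\ge 0$, using $|e^{2\conja w}|=e^{w/2}\le e^{w}$ and $e^{tw}\le e^{w}$, the same computation gives $|S'''(w)|\le K e^{w}$. The only arithmetic to verify here is $|a|=|\conja|=\tfrac{1}{2\sqrt3}$ and $|\conja|^3=\tfrac{1}{24\sqrt3}$, from which $K\approx 0.64$; in particular $K/6<e^{-1}<1<\sqrt{3/2}$, and these three comparisons are exactly what make the stated constants work.

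Finally I would split into the stated ranges. For $z\ge 0$, inserting $|S'''(tz)|\le K e^{tz}\le K e^{z}$ gives $|S(z)|\le \tfrac{K}{6}|z|^3 e^{z}\le |z|^3 e^{z}$ since $K/6<1$. For $-1\le z<0$ the argument $tz$ stays in $[-1,0]$, so $|S'''(tz)|\le K$ and $|S(z)|\le\tfrac{K}{6}|z|^3\le e^{-1}|z|^3\le e^{z}|z|^3$, using $K/6<e^{-1}\le e^{z}$; together with the previous case this proves (i). For $z\le-1$ the argument $tz$ ranges over $(-\infty,0]$, so again $|S'''(tz)|\le K$, whence $|S(z)|\le\tfrac{K}{6}|z|^3\le\sqrt{3/2}\,|z|^3$, which is (ii).

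The main obstacle is conceptual rather than computational: recognizing that the third-order vanishing at the origin both supplies the factor $|z|^3$ and tames the apparent singularity of the individual terms near $z=0$, so that a crude term-by-term estimate (which in fact fails to deliver the constant $\sqrt{3/2}$ precisely at $z=-1$, the cancellations there being essential) can be avoided altogether. Verifying $S(0)=S'(0)=S''(0)=0$, i.e.\ the identities $a+\conja=\tfrac12$ and $\tfrac{a}{2}+\conja^2=\tfrac16$, is the single place where the exact value of $a$ is genuinely used.
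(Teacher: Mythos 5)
Your proposal is correct, and it takes a genuinely different route from the paper. Both arguments hinge on the same key fact — the third-order vanishing of $S$ at the origin, coming from $a+\conja=\tfrac12$ and $a+2\conja^2=\tfrac13$ — but you extract the factor $|z|^3$ via the Taylor integral remainder and a single uniform bound $|S'''(w)|\le K:=|a|+4|\conja|^3+\tfrac14\approx 0.635$ on the half-line $w\le 0$ (respectively $Ke^{w}$ for $w\ge0$), whereas the paper works with the power series $S(z)=z^3\sum_{k\ge0}\tfrac{\alpha_{k+3}}{(k+3)!}z^k$ and the coefficient bound $|\alpha_k|\le1$ for part (i), and then switches to an entirely separate computation for part (ii), bounding $\Re(S(z))^2\le\tfrac65$ and $\Im(S(z))^2\le\tfrac{3}{10}$ directly for $z\le-1$ via the identity $e^{2\conja z}=e^{z/2}(\cos(z/\sqrt{12})+i\sin(z/\sqrt{12}))$ and only then inserting $|z|^3\ge1$. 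Your treatment is more unified (one remainder estimate covers $z\ge0$, $z\in[-1,0)$, and $z\le-1$), avoids the real/imaginary-part trigonometry entirely, and in fact delivers the sharper constant $K/6\approx0.106$ in place of $\sqrt{3/2}$ in part (ii) — consistent with the paper's own remark that $\sqrt{3/2}$ is far from optimal. The arithmetic you flag as the only places where the value of $a$ enters ($|a|=\tfrac{1}{2\sqrt3}$, the two order conditions, and the comparisons $K/6<e^{-1}<1<\sqrt{3/2}$) all check out.
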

\begin{remark}
The constant $\sqrt{3/2}$ in Lemma \ref{lemma_S} (ii) is not optimal. Numerically, we notice that $$
\sup_{z \in (-\infty, -1]} |S(z)z^{-3}|\simeq 0.005.$$
\end{remark}
\begin{ProofOf}{Lemma~\ref{lemma_S}}
Consider the series expansion
\begin{align*}
S(z)
= ae^{z} + \frac{1}{2}e^{2\conja z} +\conja -z^{-1}(e^{z}-1) 
= \sum_{k\geq 1}\frac{1}{k!}\alpha_kz^k,
\end{align*}
with coefficients $\alpha_k = a + 2^{k-1}\conja^k -\frac{1}{k+1}$. We observe that $\alpha_1 = \alpha_2 =0$ and thus,
\begin{align} \label{S_repr}
S(z) = z^3 \sum_{k\geq 0}\frac{1}{(k+3)!}\alpha_{k+3}z^k = z^3\wt{S}(z). 
\end{align} 
\begin{itemize}
\I \quad For $k \geq 3$, we have $|\alpha_k| \leq 1$,
\begin{align} \label{alpha}
|\alpha_k| &=|a + 2^{k-1}\conja^k -\frac{1}{k+1}| \leq |a|(1+(2|a|)^{k-1})+\frac{1}{k+1} \leq |a|(1+(2|a|)^2)+\frac{1}{4} 
\leq 1.
\end{align}
Therefore, for $z \geq 0$, we deduce by~\eqref{S_repr},
\begin{equation} \label{S_zgrosser1}
|S(z)| \leq z^3\sum^{\infty}_{k=0}|\alpha_{k}|z^k \leq z^3\sum^{\infty}_{k=0}\frac{z^k}{k!} =z^3e^z. 
\end{equation}
For $z \in (-1,0)$, we obtain similarly,
\begin{align*}
S(z)e^{-z} = \conja e^{-z} + \frac{1}{2}e^{-2az} +a +z^{-1}(e^{-z}-1)
= \sum_{k\geq 1}\frac{(-1)^k}{k!}\bar{\alpha}_kz^k = z^3\sum_{k\geq 0}\frac{(-1)^{k+3}}{(k+3)!}\bar{\alpha}_{k+3}z^k.
\end{align*}
We deduce from \eqref{alpha},
\begin{equation} \label{S_zgrosser2}
|z^{-3}S(z)e^{-z}| \leq \sum_{k\geq 0}\frac{1}{(k+3)!}|\bar{\alpha}_{k+3}| |z|^k \leq  \sum_{k\geq 0}\frac{1}{(k+3)!} = e-\frac{5}{2} \leq 1. 
\end{equation}
\II \quad We note that $|S(z)|^2=\Re(S(z))^2+\Im(S(z))^2$. For $z \leq -1$, we obtain for the real part $\Re(S(z))$,
\begin{align} \label{real}
\Re(S(z))^2
&= \left(\frac{1}{4}(e^z+1) +\frac{1}{2}e^{\frac{z}{2}}\cos(\frac{z}{\sqrt{12}})-z^{-1}(e^{z}-1)\right)^2 \leq \frac{6}{5},
\end{align}
where we use the identity
\begin{equation} \label{real_imag}
e^{2\conja z}
=  e^{\frac{z}{2}}(\cos(\frac{z}{\sqrt{12}})+i\sin(\frac{z}{\sqrt{12}})).
\end{equation}
Furthermore, also by means of~\eqref{real_imag}, we deduce for the imaginary part $\Im(S(z))$,
\begin{align} \label{imag}
\Im(S(z))^2 %&= (\Im(a)e^z+\frac{1}{2}e^{\frac{z}{2}}\sin(\frac{z}{\sqrt{12}})+\Im(\conja))^2\\
&=\left(\Im(a)(e^z-1)+\frac{1}{2}e^{\frac{z}{2}}\sin(\frac{z}{\sqrt{12}})\right)^2
\leq \frac{3}{10}.
\end{align}
Therefore, for $z \leq -1$, we obtain by~\eqref{real} and~\eqref{imag} the estimate 
\begin{equation} \label{S_zkleiner}
|S(z)|  \leq \sqrt{\frac{6}{5}+\frac{3}{10}} = \sqrt{\frac{3}{2}}\leq \sqrt{\frac{3}{2}}|z|^3.
\end{equation}
\end{itemize}
To conclude, we have the desired estimates for $z\geq -1$ in~\eqref{S_zgrosser1} and~\eqref{S_zgrosser2} and for $z \leq -1$ in~\eqref{S_zkleiner}.
\end{ProofOf}
Hypothesis (H1) and (H2) together with Lemma \ref{lemma_S} give us the required properties for the local error. 
%%%%%%%%%%%%	Lemma 1	%%%%%%%%%%%%%
\begin{lemma} \label{prop1_localerr} Consider the assumptions of Theorem~\ref{globalerr_thm} and assume \emph{(H1)} and \emph{(H2)} hold true. Then, for the splitting method \eqref{c3corr}, the local error $\delta_{n+1}$ satisfies the following estimates,
\begin{equation} \label{delta1}
\delta_{n+1} = \mathcal{O}(\tau^3),
\end{equation}
and
\begin{equation} \label{delta2}
 \delta_{n+1} = A\mathcal{O}(\tau^4).
\end{equation}
\end{lemma}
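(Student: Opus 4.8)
The plan is to start from the closed form of the local error provided by Lemma~\ref{localerr}, namely $\delta_{n+1} = \tau S(\tau A)(f-q)$, and to turn the scalar bounds on $S$ from Lemma~\ref{lemma_S} into operator bounds via the spectral decomposition of $A$. Writing $f-q = \sum_{j\ge1}c_je_j$ in the orthonormal eigenbasis $\{e_j\}$ with eigenvalues $\lambda_j\le0$, functional calculus gives $S(\tau A)(f-q)=\sum_{j\ge1}S(\tau\lambda_j)c_je_j$, so by Parseval $\norm{S(\tau A)(f-q)}^2=\sum_{j\ge1}|S(\tau\lambda_j)|^2|c_j|^2$. The crucial point is that each argument $z=\tau\lambda_j$ is real and nonpositive, so Lemma~\ref{lemma_S} applies verbatim. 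I would record the two facts it yields: $|S(z)|\le|z|^3$ for $z\in[-1,0]$, and --- reading off~\eqref{S_zkleiner} inside the proof of Lemma~\ref{lemma_S}(ii) rather than its final cubic form --- the uniform bound $|S(z)|\le\sqrt{3/2}$ for $z\le-1$; together these give $|S(z)|\le\sqrt{3/2}$ on all of $(-\infty,0]$.

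Next I would split the sum according to frequency, into $J_{\mathrm{low}}=\{j:\tau|\lambda_j|\le1\}$ and $J_{\mathrm{high}}=\{j:\tau|\lambda_j|>1\}$, and in each regime trade powers of $\lambda_j$ against powers of $\tau$ so that everything collapses onto the quantity controlled by hypothesis (H2), $\sum_{j}|\lambda_j|^4|c_j|^2=\norm{A^2(f-q)}^2\le C$. For~\eqref{delta1}, on $J_{\mathrm{low}}$ the cubic bound gives $|S(\tau\lambda_j)|^2\le\tau^6|\lambda_j|^6=\tau^4|\lambda_j|^4(\tau|\lambda_j|)^2\le\tau^4|\lambda_j|^4$, while on $J_{\mathrm{high}}$ the uniform bound together with $|\lambda_j|^{-4}\le\tau^4$ gives $|S(\tau\lambda_j)|^2\le\tfrac32\le\tfrac32\tau^4|\lambda_j|^4$. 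Summing, $\norm{S(\tau A)(f-q)}^2\le C\tau^4\norm{A^2(f-q)}^2$, whence $\delta_{n+1}=\tau S(\tau A)(f-q)=O(\tau^3)$.

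For~\eqref{delta2} I would apply $A^{-1}$ and estimate $A^{-1}\delta_{n+1}=\tau S(\tau A)A^{-1}(f-q)$; this operator is well defined even if $0\in\sigma(A)$ because $S$ vanishes to third order at the origin ($\alpha_1=\alpha_2=0$), so the symbol $S(\tau\lambda)/\lambda$ extends continuously by $0$ at $\lambda=0$. The same splitting, now with an extra factor $|\lambda_j|^{-2}$, yields on $J_{\mathrm{low}}$ the bound $|S(\tau\lambda_j)|^2/|\lambda_j|^2\le\tau^6|\lambda_j|^4$ and on $J_{\mathrm{high}}$ the bound $|S(\tau\lambda_j)|^2/|\lambda_j|^2\le\tfrac32|\lambda_j|^{-2}\le\tfrac32\tau^6|\lambda_j|^4$, so that $\norm{A^{-1}\delta_{n+1}}^2\le C\tau^8\norm{A^2(f-q)}^2$ and hence $\delta_{n+1}=AO(\tau^4)$.

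The main obstacle --- and the conceptual heart of the lemma --- is the high-frequency part $J_{\mathrm{high}}$, where $S(\tau\lambda_j)$ does not decay but only stays bounded. There the estimate must be bought entirely from the regularity of $f-q$, and one checks that (H1) alone would furnish only a factor $\tau^2$ on this part (using $|c_j|^2\le\tau^2|\lambda_j|^2|c_j|^2$ and $\norm{A(f-q)}\le C$), i.e. order two and hence order reduction; it is precisely the second bound $\norm{A^2(f-q)}\le C$ from (H2), engineered by the extra corrector so that $D(f-q)$ again satisfies the boundary conditions, that supplies the missing power of $\tau$ and upgrades the estimate to the third-order bounds~\eqref{delta1}--\eqref{delta2}. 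The low-frequency part is routine, and beyond this balancing the remaining steps are the elementary index-set bookkeeping sketched above.
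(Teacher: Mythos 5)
Your proposal is correct and follows essentially the same route as the paper: expand $f-q$ in the eigenbasis of $A$, split the spectrum at $\tau|\lambda_j|=1$, use the cubic bound of Lemma~\ref{lemma_S}(i) on the low frequencies and the uniform bound $\sqrt{3/2}$ on the high frequencies, and trade powers of $\tau|\lambda_j|$ so that both pieces are controlled by $\norm{A^2(f-q)}$ from (H2). The only cosmetic difference is that the paper factors the symbol as $(\tau A)^k\wt{S}(\tau A)$ and invokes the parabolic smoothing property~\eqref{psp} for the low-frequency part, whereas you bound $e^z\le 1$ for $z\le 0$ directly; the two are equivalent here.
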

\begin{proof} 
Let $v \in D(A^2)$ and consider its expansion in the Hilbert basis, $v = \sum_{j\geq 1}v_je_j$. Denote by $\{\lambda_j\}_j$ the corresponding eigenvalues of $A$, then $\wt{S}(\tau A)v = \sum_{j\geq 1}\wt{S}(\tau\lambda_j)v_je_j$, where $\wt{S}(z)$ was introduced in~\eqref{S_repr}. 
Let $\kappa \geq 1$ depending on $\tau$ such that
$$\tau\lambda_1 \geq \ldots \geq \tau\lambda_{\kappa} \geq -1 > \tau\lambda_{\kappa+1} \geq \ldots.
$$ Thus, we obtain
\begin{align} \label{est1}
\Norm[\big]{(\tau A)^2\wt{S}(\tau A)v}^2 
=\sum_{j\geq1}^{\kappa}|(\tau\lambda_j)^2|^2|\wt{S}(\tau\lambda_j)|^2|v_j|^2 + \sum_{j\geq\kappa+1}|(\tau\lambda_j)^2|^2|S(\tau\lambda_j)|^2|v_j|^2.
\end{align}
By Lemma \ref{lemma_S} (i) and \eqref{psp} (with $\alpha=0$), we get the following bound,
\begin{align} \label{est2}
\sum_{j\geq1}^{\kappa}|(\tau\lambda_j)^2|^2|\wt{S}(\tau\lambda_j)|^2|v_j|^2 
&\leq \norm{(\tau A)^2e^{\tau A}v}^2  \leq \tau^4\Norm[\big]{e^{\tau A}}^2\norm{A^2v}^2 \leq C\tau^4.
\end{align}
Equivalently, for the second sum, we have by Lemma \ref{lemma_S} (ii),
\begin{align} \label{est3}
\sum_{j\geq \kappa+1}|(\tau\lambda_j)^2|^2|\wt{S}(\tau\lambda_j)|^2|v_j|^2 
\leq  \frac{3}{2}\sum_{j\geq \kappa+1}|(\tau\lambda_j)^2|^2|v_j|^2
=\frac{3}{2}\tau^4 \norm{A^2v}^2 \leq C\tau^4.
\end{align}
Therefore, Lemma \ref{lemma_fq} permits to conclude the proof of~\eqref{delta2} using $v=f-q$. For proving~\eqref{delta1}, we consider $S(\tau A)$ instead of $(\tau A)^2\wt{S}(\tau A)$ and proceed the same way as in~\eqref{est1}. Furthermore, instead of~\eqref{est2}, again by Lemma \ref{lemma_S} (i) and \eqref{psp} (with $\alpha=1$), we get
\begin{align*}
\sum_{j\geq1}^{\kappa}|(\tau\lambda_j)^3|^2|\wt{S}(\tau\lambda_j)|^2|v_j|^2 \leq \sum_{j\geq1}^{\kappa}|(\tau\lambda_j)^3|^2|e^{\tau\lambda_j}|^2|v_j|^2 \leq \tau^6 \norm{Ae^{\tau A}}^2\norm{A^2v}^2 \leq C\tau^4.
\end{align*}
For the second term in~\eqref{est1}, we use the same estimate as in~\eqref{est3}, which terminates the proof of~\eqref{delta1}.
\end{proof}
We may now prove the main result of this paper and show the global third order convergence for the splitting method \eqref{c3corr} by following the proof of \cite[ Proposition~4.10]{Ber20}.

%%%%%%%%%	Proof Main Theorem	%%%%%%%%%%%%%%%%
\begin{ProofOf}{Theorem \ref{globalerr_thm}} The global error $e_{n} = u_n -u(t_n)$ satisfies
\begin{align*}
e_{n+1} = \phi^{\text{C3New}}_{\tau}(u_n) - u(t_{n+1})
=\phi^{\text{C3New}}_{\tau}(u_n)-\phi^{\text{C3New}}_{\tau}(u(t_n))+\delta_{n+1}.
\end{align*}
Applying \eqref{solnum}, we observe that $\phi^{\text{C3New}}_{\tau}(u_n)-\phi^{\text{C3New}}_{\tau}(u(t_n)) = e^{\tau A}e_n$. 
Thus, we have the following recursion formula for the global error,
\begin{equation*} 
e_n = e^{n\tau A}e_0 + \sum^{n-1}_{k=0}e^{(n-k-1)\tau A}\delta_{k+1}.
\end{equation*}
Since we choose the same initial condition for $u_n$ and $u(t_n)$, there holds $e_0 = 0$. Additionally, by Lemma~\ref{prop1_localerr},
\begin{align} \label{en_est1}
\sum^{n-1}_{k=0}e^{(n-k-1)\tau A}\delta_{k+1} = \sum^{n-2}_{k=0}e^{(n-k-1)\tau A}\delta_{k+1} + \delta_n = \sum^{n-2}_{k=0}e^{(n-k-1)\tau A}AO(\tau^4)+ O(\tau^3).
\end{align}
Furthermore, by the parabolic smooting property~\eqref{psp} of the operator $A$, we obtain
\begin{align} \label{en_est2}
\sum^{n-2}_{k=0}\Norm[\big]{e^{(n-k-1)\tau A}A} &\leq \frac{C}{\tau}\sum^{n-2}_{k=0}(n-k-1)^{-1} 
\leq  \frac{C}{\tau}\sum^{n-1}_{k=1}k^{-1} 
\leq \frac{C}{\tau}(1+|\log(\tau)|).
\end{align}
We obtain the desired estimate~\eqref{main_ineq} by means of~\eqref{en_est1} and~\eqref{en_est2}.
\end{ProofOf}
Theorem~\ref{globalerr_thm} states that the splitting method \eqref{c3corr} avoids order reduction when applied to equation~\eqref{pde} with source term $f=f(x)$. Since the naive method~\eqref{c3} suffers from an order reduction already when integrating in time problem~\eqref{pdelin} with $f$ vanishing on the boundary $\partial\Om$, we state the following result.
\begin{corollary} \label{globalerr_cor} Consider problem~\eqref{pdelin} under the assumptions of Section~\ref{Af} with source function $f~\in~H^4(\Om)$. Then, if $f \in D(A)$, the splitting method~\eqref{c3} satisfies the error estimate
\begin{equation} \label{main_ineq_two}
\norm{u_n-u(t_n)} \leq C\tau^2(1+|\log(\tau)|) \quad \text{for}\; 0 \leq t_n \leq T,
\end{equation}
for all $\tau >0$ and $C$ a constant, which is independent on $\tau$ and $n$. \\ 
If additionally $f \in D(A^2)$, then method~\eqref{c3} satisfies~\eqref{main_ineq}, that is
$$
\norm{u_n-u(t_n)} \leq C\tau^3(1+|\log(\tau)|) \quad \text{for}\; 0 \leq t_n \leq T.
$$
\end{corollary}
\begin{proof} Consider the local error $\delta^0_{n+1} = \phi^{\text{C3Naiv}}_{\tau}(u(t_n))-u(t_{n+1})$ of method~\eqref{c3} when applied to problem~\eqref{pdelin}. We obtain equivalently to~\eqref{localerr}, $\delta^0_{n+1}=\tau S(\tau A)f$. Furthermore, $f\in D(A)$ satisfies (H1) with $q=0$. Therefore, following the lines of the proof of Lemma~\ref{prop1_localerr} shows $\delta^0_{n+1} = A\mathcal{O}(\tau^3)$ and $\delta^0_{n+1} =\mathcal{O}(\tau^2)$. Finally, we get~\eqref{main_ineq_two} by following the proof of Theorem~\ref{globalerr_thm}.   
If additionally $f \in D(A^2)$, then (H1) and (H2) hold true for $q=0$, and we can proceed as for the proof of Theorem~\ref{globalerr_thm} with $f$ instead of $f-q$.
\end{proof}
In the next section, we confirm the results from Theorem~\ref{globalerr_thm} and Corollary~\ref{globalerr_cor} by numerical experiments. Furthermore, we give examples for more general problems which suggest that the global third order convergence of method~\eqref{c3corr} numerically also perstists for nonlinear source terms and solution-dependent boundary conditions.

\section{Numerical experiments} \label{num_exp}
In the following, we compare several splitting methods, where we use the notation \emph{StrangNaiv} for the Strang splitting~\eqref{strang} and \emph{StrangCorr} for the corrections made in~\cite{Ber20},~\cite{Ein15, Ein16} respectively, which coincide for a source term independent on the solution. Moreover, we denote by \emph{C3Naiv} the naive method~\eqref{c3} and by \emph{C3New} the new method~\eqref{c3corr}. For the methods of formal order four, we use \emph{C4Naiv} for the composition~\eqref{c4} and \emph{C4New} for the new method~\eqref{c4corr}. In the splitting schemes of formal order higher than two, complex time steps appear, which involve complex arithmetic. The extra computational costs are compensated by a higher accuracy, see Figure~\ref{fig:indf_1d_1}-\ref{fig:fisher}, \ref{fig:autbc}, \ref{fig:c4}.

\begin{table}[h]
\captionsetup{labelformat=simple,name=Table,labelfont=bf,textfont=it}
\caption{Convergence order $p$ of the naive splitting methods StrangNaiv and C3Naiv and the new method C3New applied to problem~\eqref{pdelin} for different source functions~$f$.\label{table_find}}%
%\begin{tabular*}{\columnwidth}{@{\extracolsep\fill}|l|c|c|c|@{\extracolsep\fill}}
\begin{tabular*}{14.9353cm}{|l|c|c|c|}
\hline
Splitting Method & $f \in D(A) \cap D(A^2)$ & $f \in D(A), f \notin D(A^2)$ & $f \notin D(A)$ \\
\hline
\multirow{2}{7em}{StrangNaiv~\eqref{strang}} & $p=2$, see~\cite{Ber20, Ein15, Ein16, Fao15} & $p=2$, see~\cite{Ber20, Ein15, Ein16, Fao15} & $1\leq p <2$\\	
 & no order reduction & no order reduction & order reduction \\
\hline
\multirow{2}{7em}{C3Naiv \eqref{c3}}& $p=3$, see Corollary~\ref{globalerr_cor} & $p=2$, see Corollary~\ref{globalerr_cor} & $1\leq p<2$\\
& no order reduction & order reduction & order reduction\\
\hline
\multirow{2}{7em}{C3New \eqref{c3corr}}& \multicolumn{3}{c|}{$p=3$, see Theorem~\ref{globalerr_thm}}\\
&\multicolumn{3}{c|}{no order reduction}\\
\hline
\end{tabular*}
\end{table}
Firstly, we apply the suggested correction \emph{C3New} of the splitting method \emph{C3Naiv} to several numerical examples in dimension $d=1,2$. On the one hand, we confirm the theory proved in Section \ref{conv_anal} by the performance of \emph{C3New} to equations of the form \eqref{pdelin}, i.e. problems with solution-independent source functions $f$. Thereby, we distinguish three different types of source terms (see Table~\ref{table_find}). On the other hand, we illustrate that \emph{C3New} does neither suffer from an order reduction when applied to nonlinear problems of type \eqref{pde}. In the splitting algorithm, the operator $D$ is approximated by finite differences and the diffusion problem \eqref{D} is solved exactly up to round off errors using the algorithm based on Krylov basis described in \cite{Nie12}. We use analytical formulas for the solution of the source equation \eqref{f} and for the flows of the correction functions $q_n$. 

\paragraph{A solution-independent source term in 1D (Figures \ref{fig:indf_1d_1} and \ref{fig:indf_1d_2})} In this first numerical experiment, we consider a parabolic equation on the interval $\Om = (0,1)$, with solution-independent source term $f=f(x)$ and Dirichlet boundary conditions,
\begin{equation} \label{pdeindep}
\partial_tu(x,t) = \partial_{xx}u(x,t)+ f(x), \quad  u(0,t)=u(1,t)=0, \quad u(x,0)=\sin(2\pi x).
\end{equation}
\begin{figure}[!tbp] 
\begin{minipage}[b]{0.49\textwidth}
\includegraphics[clip,trim=120 60 120 60,scale=0.36]{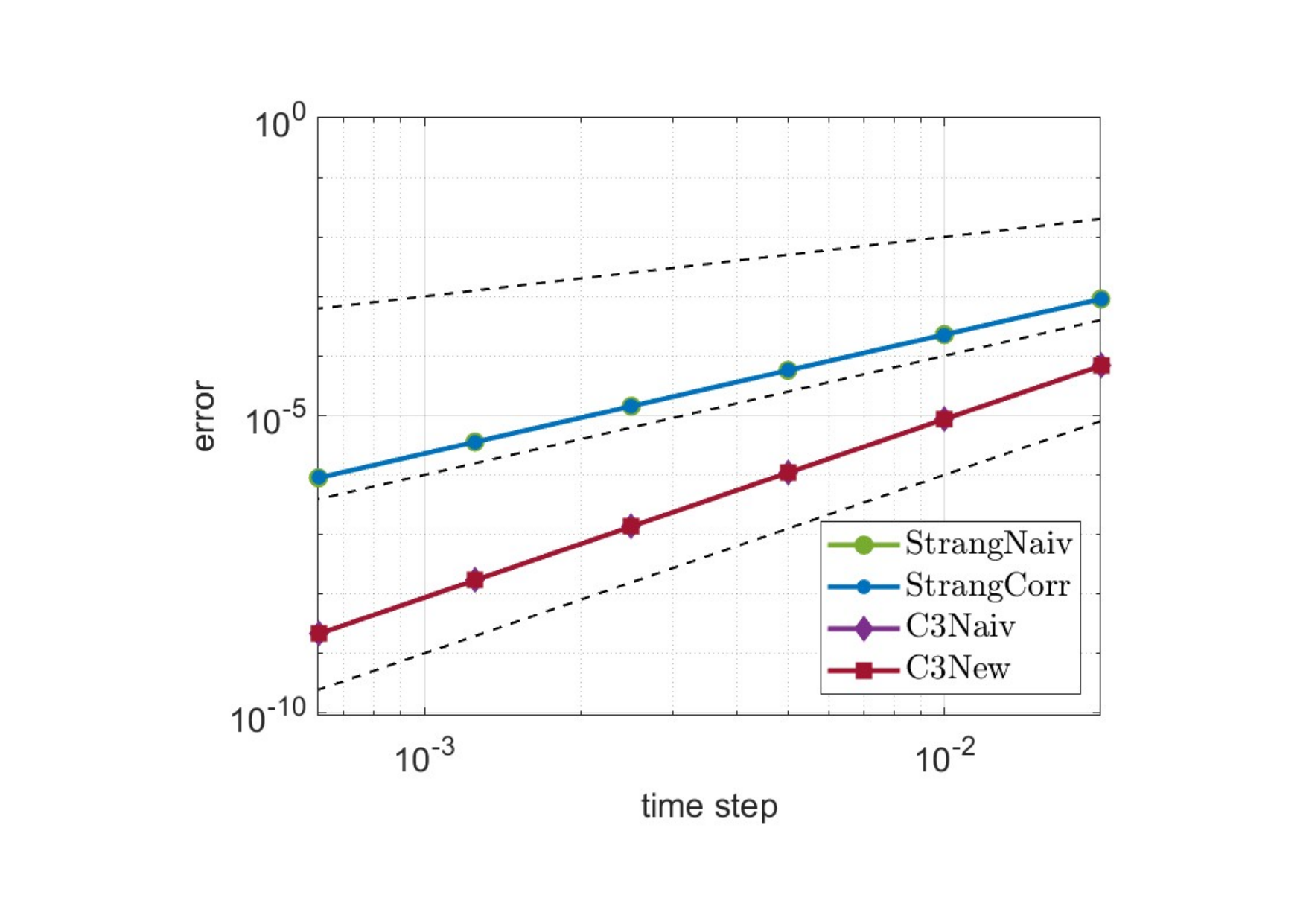}
\centering\small
$f(x)=\sin(2\pi x),$\; $f \in D(A) \cap D(A^2)$
\end{minipage}
\hspace{-10pt}
\begin{minipage}[b]{0.49\textwidth}
\includegraphics[clip,trim=120 60 120 60,scale=0.36]{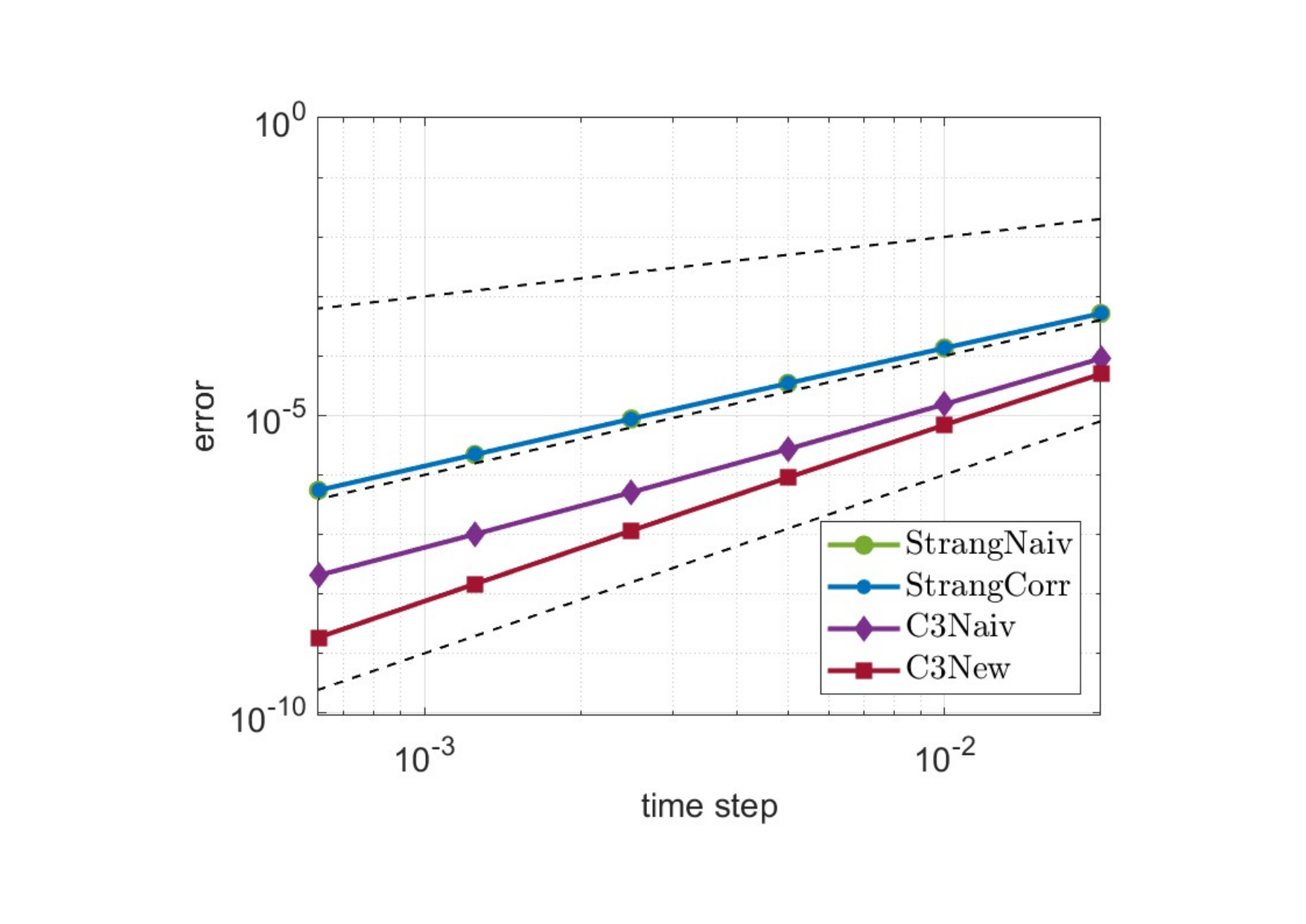}
\centering\small
$f(x)=x^2\sin(2\pi x),$\;$f \in D(A), f \notin D(A^2)$ 
\end{minipage}
\vspace*{1mm}
\caption{Convergence error of the corrected method \emph{C3New} applied to problem \eqref{pdeindep} for two different source functions inside the domain of the diffusion operator. Comparison to the naive third order method \emph{C3Naiv}, the Strang splitting \emph{StrangNaiv} and its  corrected version \emph{StrangCorr}. Reference slopes of order one, two and three are given in dotted lines. 
}
\label{fig:indf_1d_1}
\end{figure}

\begin{figure}[!tbp] 
\begin{minipage}[b]{0.49\textwidth}
\includegraphics[clip,trim=120 60 120 60,scale=0.36]{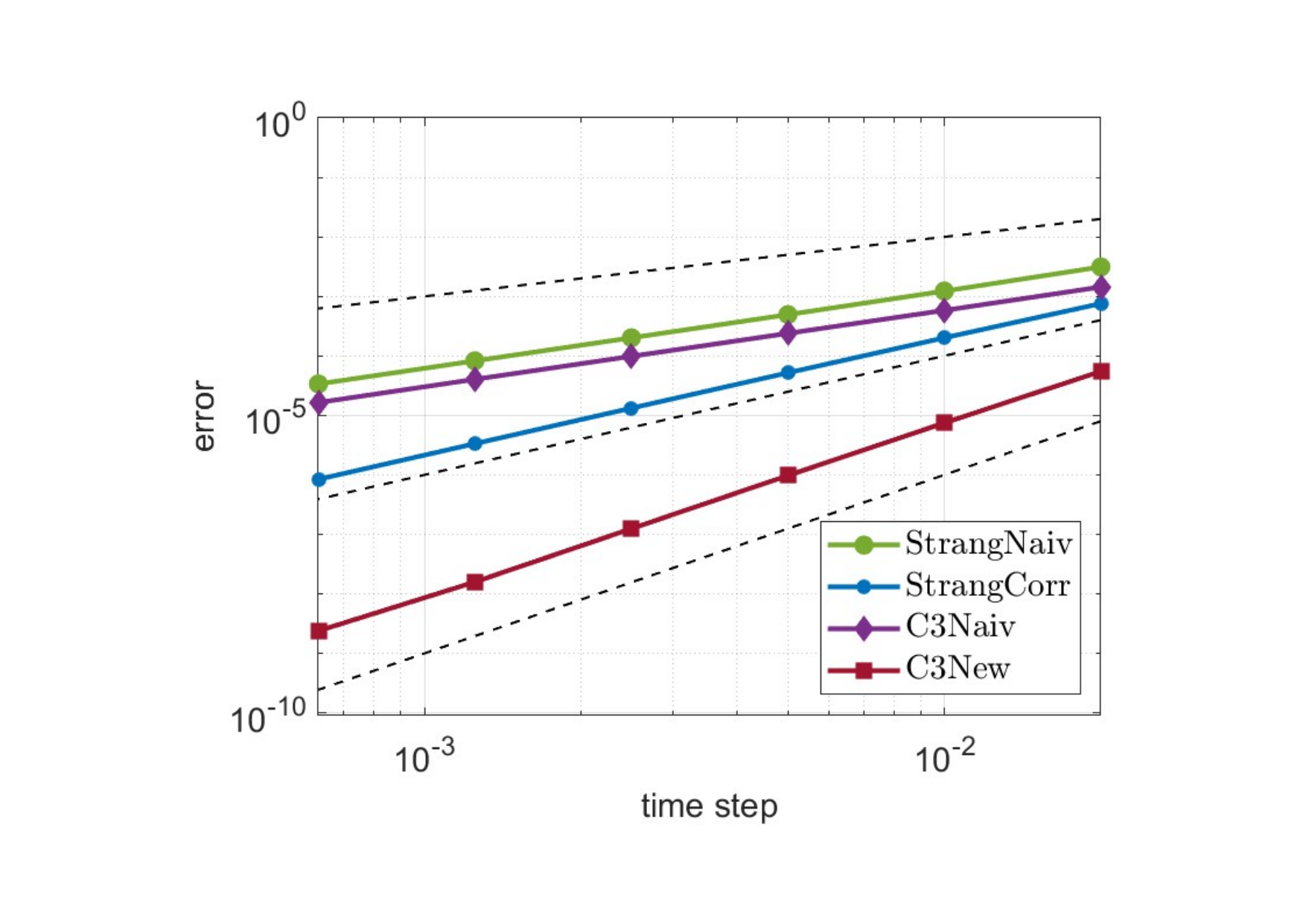}
\centering \small
$f(x)=\cos(2\pi x),$\;$f \notin D(A)$
\end{minipage} 
\hspace{-10pt}
\begin{minipage}[b]{0.49\textwidth}
\includegraphics[clip,trim=120 60 120 60,scale=0.36]{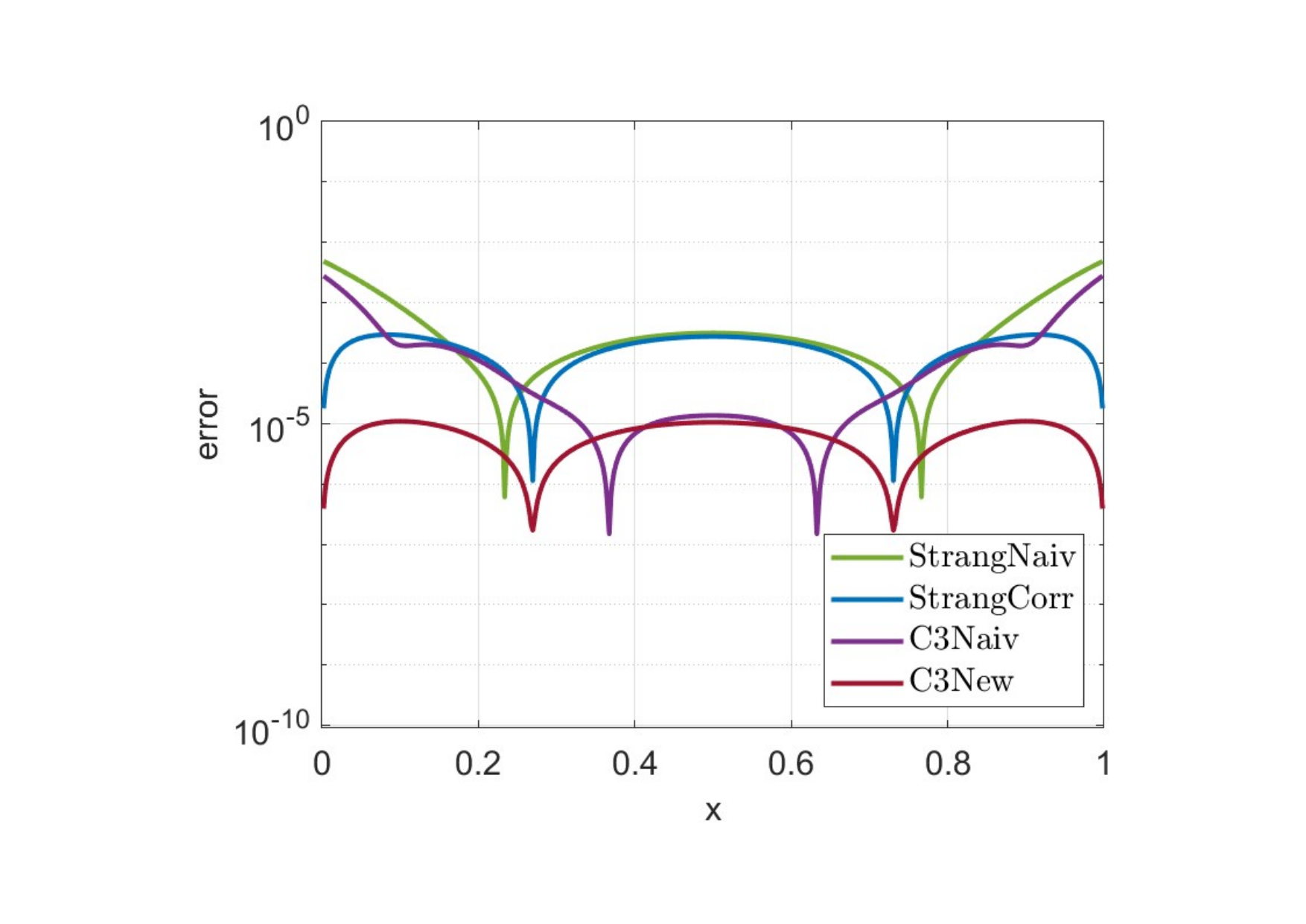}
\centering \small
\hspace{2cm}
\end{minipage}
\vspace*{1mm}
\caption{Convergence error of the corrected method \emph{C3New} applied to problem \eqref{pdeindep} for a source function $f(x)=\cos(2\pi x)$ outside the domain of the diffusion operator. Comparison to the naive third order method \emph{C3Naiv}, the Strang splitting \emph{StrangNaiv} and its corrected version \emph{StangCorr}. Left: Error in terms of the time step $\tau$. Reference slopes of order one, two and three are given in dotted lines. Right: The pointwise error at final time $T=0.1$ for time step $\tau = 10^{-2}$.
}
\label{fig:indf_1d_2}
\end{figure}

Problem~\eqref{pdeindep} was chosen to illustrate the results given in Section~\ref{conv_anal}. It can be solved exactly in time without any splitting. For the performance, we choose three different source functions $f$. The observed convergence orders are summerized in Table~\ref{table_find}. We mention that the method \emph{StrangNaiv} only suffers from an order reduction if we choose a general source function which is not in the domain of the diffusion operator, while \emph{C3Naiv} converges with reduced order if $f \in D(A)$ but $f \notin D(A^2)$, see Figure~\ref{fig:indf_1d_1} and~\ref{fig:indf_1d_2} (left). Moreover, the new method \emph{C3New} converges with order three in any case and has a better error constant compared to the other splitting methods we processed. Furthermore, from Figure~\ref{fig:indf_1d_2} (right) we deduce that the error for the naive methods \emph{StrangNaiv} and \emph{C3Naiv} is maximal at the boundary of the domain, which was already observed in \cite{Han09b} in the context of the Strang splitting method \emph{StrangNaiv}. In contrast, the corrected method \emph{C3New} is $10^4$ times more accurate on the boundary than its naive version. Since the pointwise error is presented in a logaritmic scale, we observe some "bumps"in the points where the error changes its sign.

For this experiment, we set the final time $T=0.1$, and time steps $\tau=0.02\cdot 2^{-k}$ for $k=0,1,\ldots,5$. For the reference solution, we use the naive method \emph{StrangNaiv} with a very small time step $\tau_{ref} = 10^{-6}$. The domain $\Om = (0,1)$ is discretized in a uniform mesh with mesh size $\Delta x = 2\cdot10^{-3}$.

\begin{remark} \label{rem_numf} In the previous experiment~\eqref{pdeindep}, the source $f=f(x)$ and its derivatives are available with an explicit analytical formula, and therefore no numerical differentiation is required. Precisely, as stated in~\eqref{bc_qr}, we set $q=f$  and $r=f''$ on the boundary $\partial\Om$. Furthermore, to discretize the domain $\Om$, we consider the uniform mesh $\wt{\Om} = (x_i)_{i=1}^N$, where $N+1=1/\Delta x$. We use finite differences to approximate the Laplacian $\partial_{xx}$ in $\wt{\Om}$. Thus, for $i=1, \ldots, N$, the correctors $r$ and $q$ are obtained by the formulas
$$
\partial_{xx}r(x_i)+\delta_{i1}f''(0)+\delta_{iN}f''(1)=0 \quad \text{and}\quad \partial_{xx}q(x_i)+\delta_{i1}f(0)+\delta_{iN}f(1)=r(x_i),
$$
where $\delta_{ij}$ denotes the Kronecker delta. The derivative $f''$ can be evaluated analytically or numerically. Analogous calculations are made if we consider a similar problem in a two-dimensional domain.
\end{remark}

\paragraph{A solution-independent source term in 2D (Figure \ref{fig:indf_2d})} We consider the following parabolic problem with Dirichlet boundary conditions and solution-independent source term $f(x,y)$,
\begin{equation}
\begin{split}
\partial_tu(x,y,t) &= \Delta u(x,y,t)+ e^xy^7+1,\\ 
u(0,y,t)&=y, \quad u(1,y,t)=1+y \quad u(x,0,t)=x, \quad u(x,1,t)=1+x,\\
u_0(x,y)&=x+y.
\label{pdeindep2d}
\end{split}
\end{equation}
\begin{figure}[!tbp] 
\begin{minipage}[b]{0.49\textwidth}
\includegraphics[clip,trim=100 60 120 60,scale=0.36]{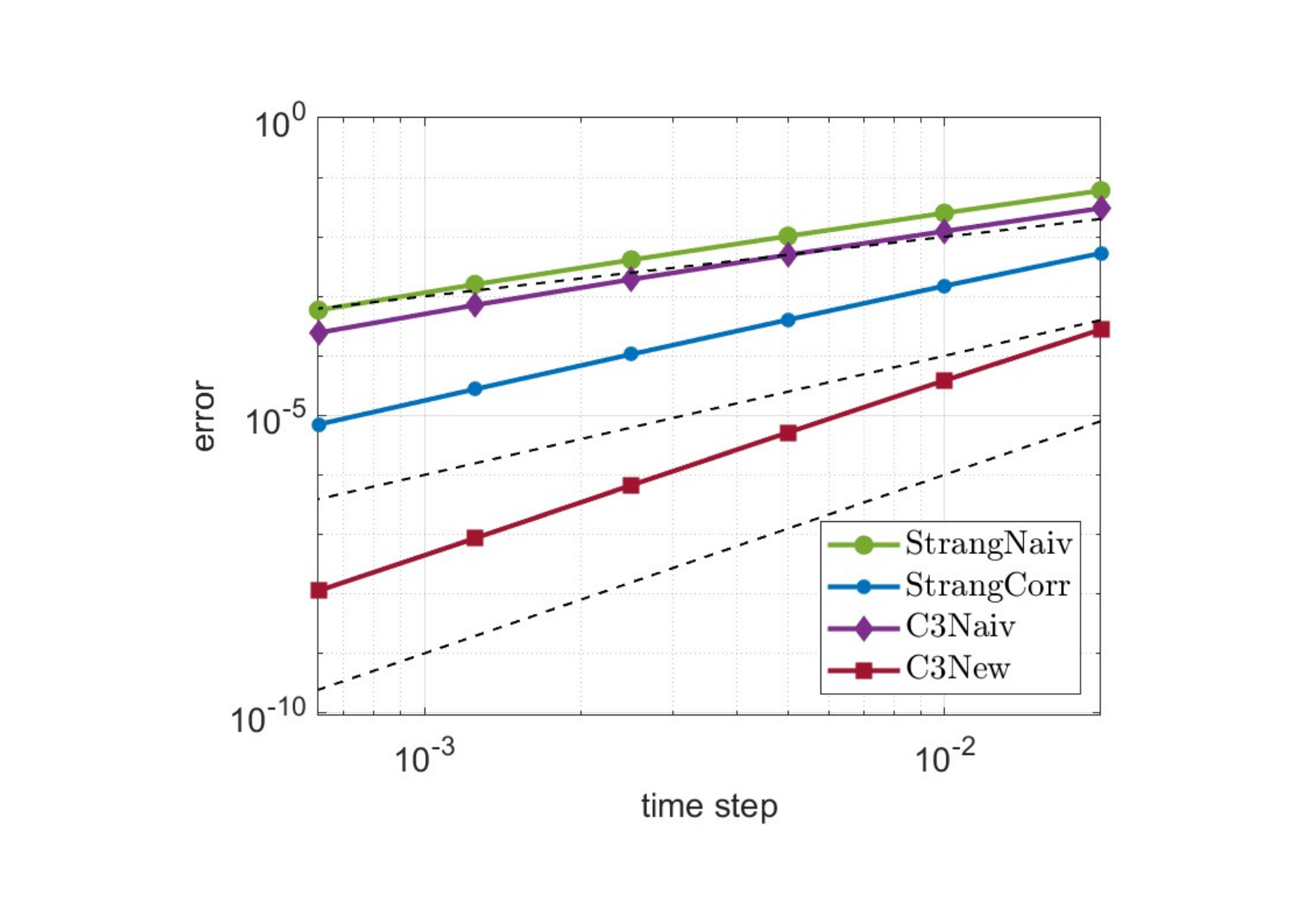}
\end{minipage}
\hspace{-10pt}
\begin{minipage}[b]{0.49\textwidth}
\includegraphics[clip,trim=100 60 120 60,scale=0.36]{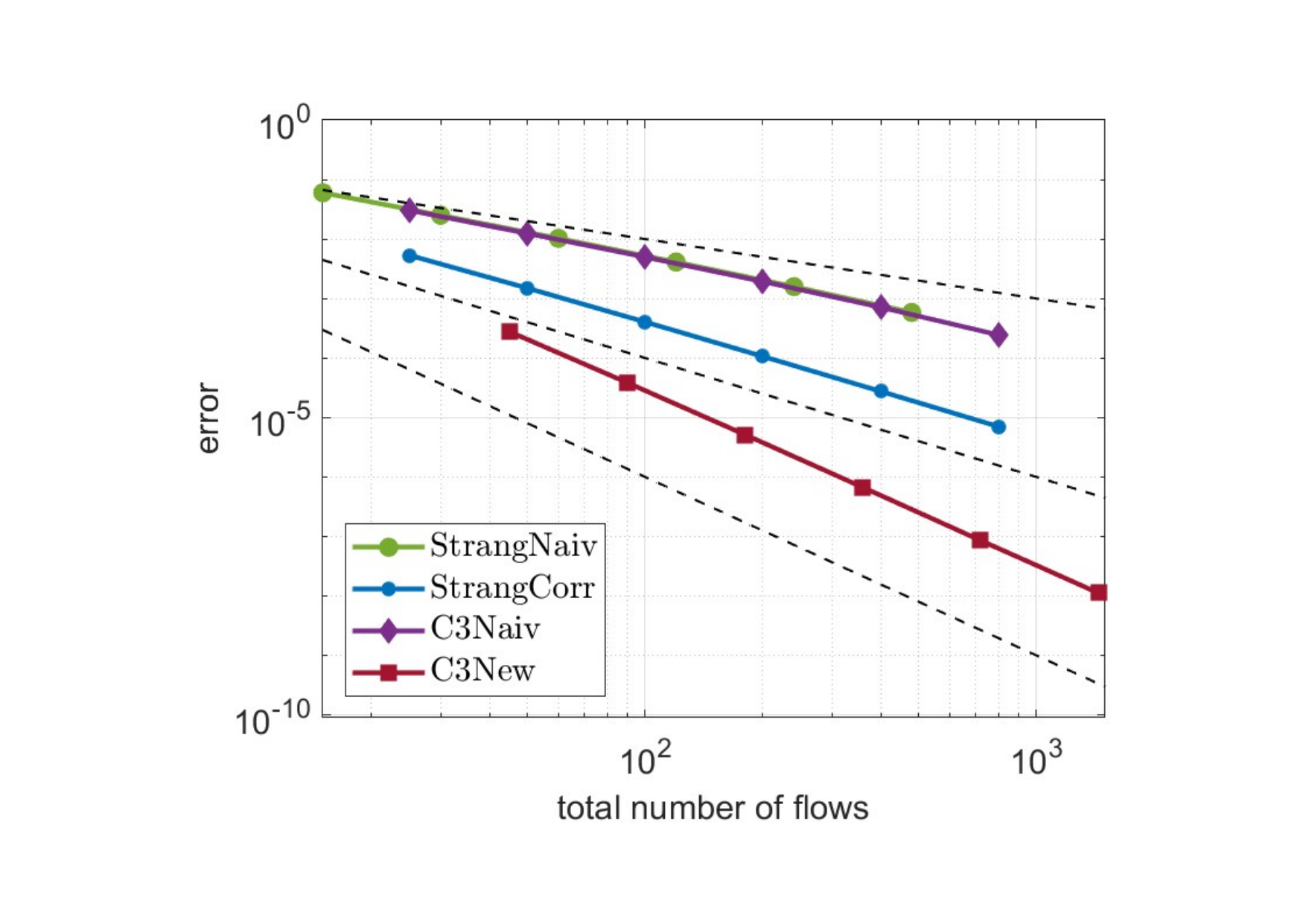}
\end{minipage}
\vspace*{1mm}
\caption{Convergence error of the splitting methods \emph{StrangNaiv}, \emph{StrangCorr}, \emph{C3Naiv} and \emph{C3New} in dependence of the time step (left) and of the total number of flows (right), applied to problem \eqref{pdeindep2d}. Reference slopes of order one, two and three are given in dotted lines.
}
\label{fig:indf_2d}
\end{figure}

In Figure \ref{fig:indf_2d}, we compare the convergence of the new method \emph{C3New} with the splitting methods \emph{Strang\-Naiv}, \emph{StrangCorr} and \emph{C3Naiv}. We observe that \emph{C3New} converge with order three while we note a convergence order between one and two for \emph{C3Naiv}. Additionally, for example for a time step $\tau=10^{-3}$, we note in the left picture that the error is $10^4$ times more accurate for the corrected method \emph{C3New}, compared to \emph{C3Naiv}. On the right picture, we recover that \emph{C3New} requires less evaluations of the flows for a given precision and is therefore more accurate than the other considered splitting methods \emph{StrangNaiv}, \emph{StrangCorr} and \emph{C3Naiv}. Additionally, the error in dependence of the total number of flows coincide for the naive methods.

We process this experiment with the same time parameters $\tau, \tau_{ref}$ and $T$ as in the first example. We choose a uniform mesh of size $\Delta x = 10^{-2}$ for the two-dimensional square domain $\Om = (0,1)^2$. 

\paragraph{The Fisher-KPP equation (Figures \ref{fig:fisher} and \ref{fig:fisher:err})} For a third example, we apply the methods \emph{C3Naiv} and its corrected version \emph{C3New} to the Fisher-Kolmogorov-Petrovski-Piskunov equation~\cite{Ach22, Sim06}
\begin{equation} \label{pde:fisher}
\begin{split}
\partial_tu(x,y,t) &= \Delta u(x,y,t)+ F(u(x,y,t)) \quad \text{in}\; \Om\times(0,T], \\ 
u(x,y,t)&=1/2 \quad \text{on}\; \partial\Om\times(0,T],\\
u_0(x,y)&=\sin(2\pi x)\sin(2\pi y)+1/2 \quad \text{in}\; \Om,
\end{split}
\end{equation}
with nonlinearity $F(u) = Mu(1-u)$ for $M>0$. Problem~\eqref{pde:fisher} models the wound heeling process when only biological feature are considered. The solution $u$ represents the cell density.
\begin{figure}[!tbp] 
\begin{minipage}[b]{0.49\textwidth}
\includegraphics[clip,trim=100 60 120 60,scale=0.36]{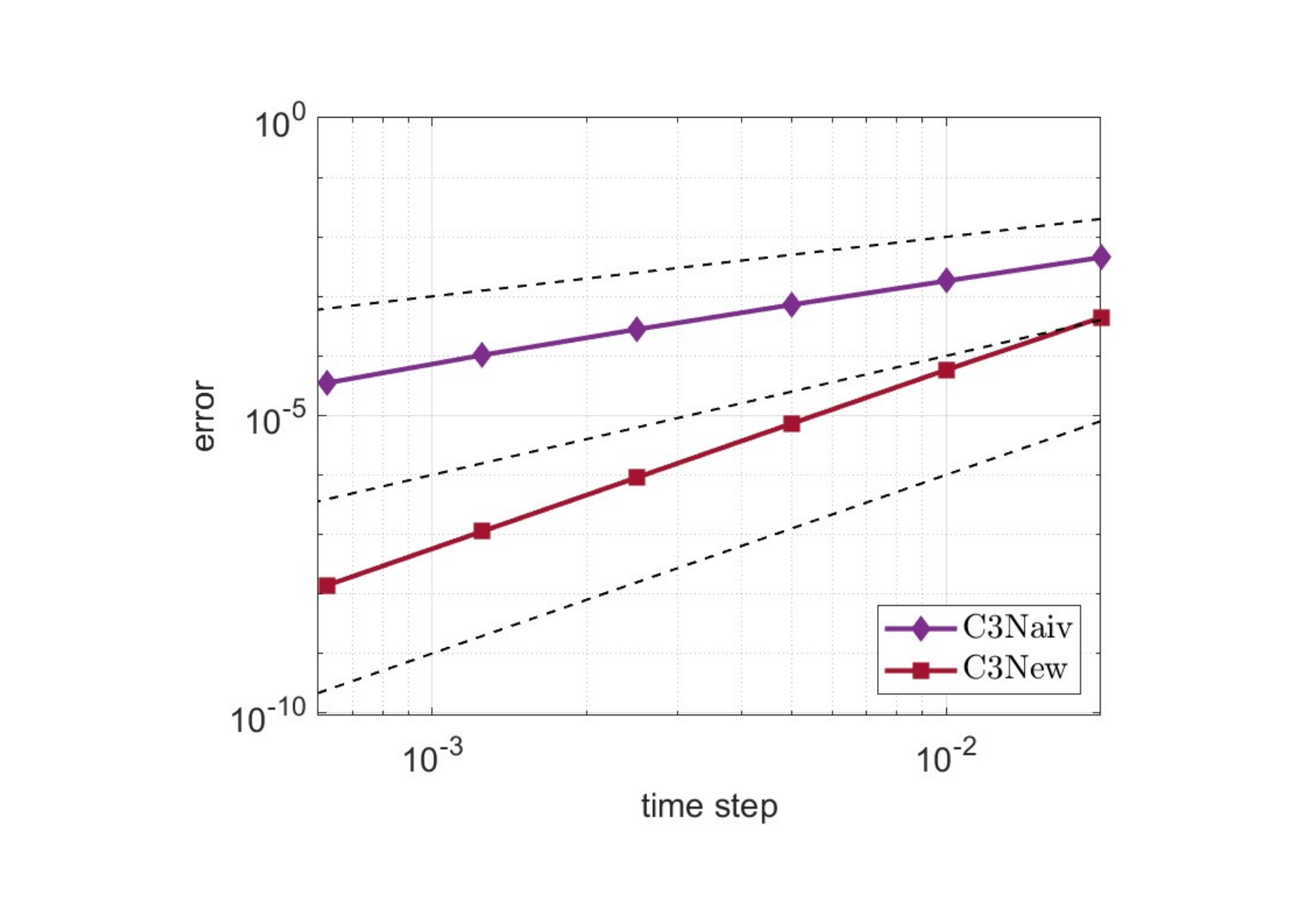}
\end{minipage}
\hspace{-10pt}
\begin{minipage}[b]{0.49\textwidth}
\includegraphics[clip,trim=100 60 120 60,scale=0.36]{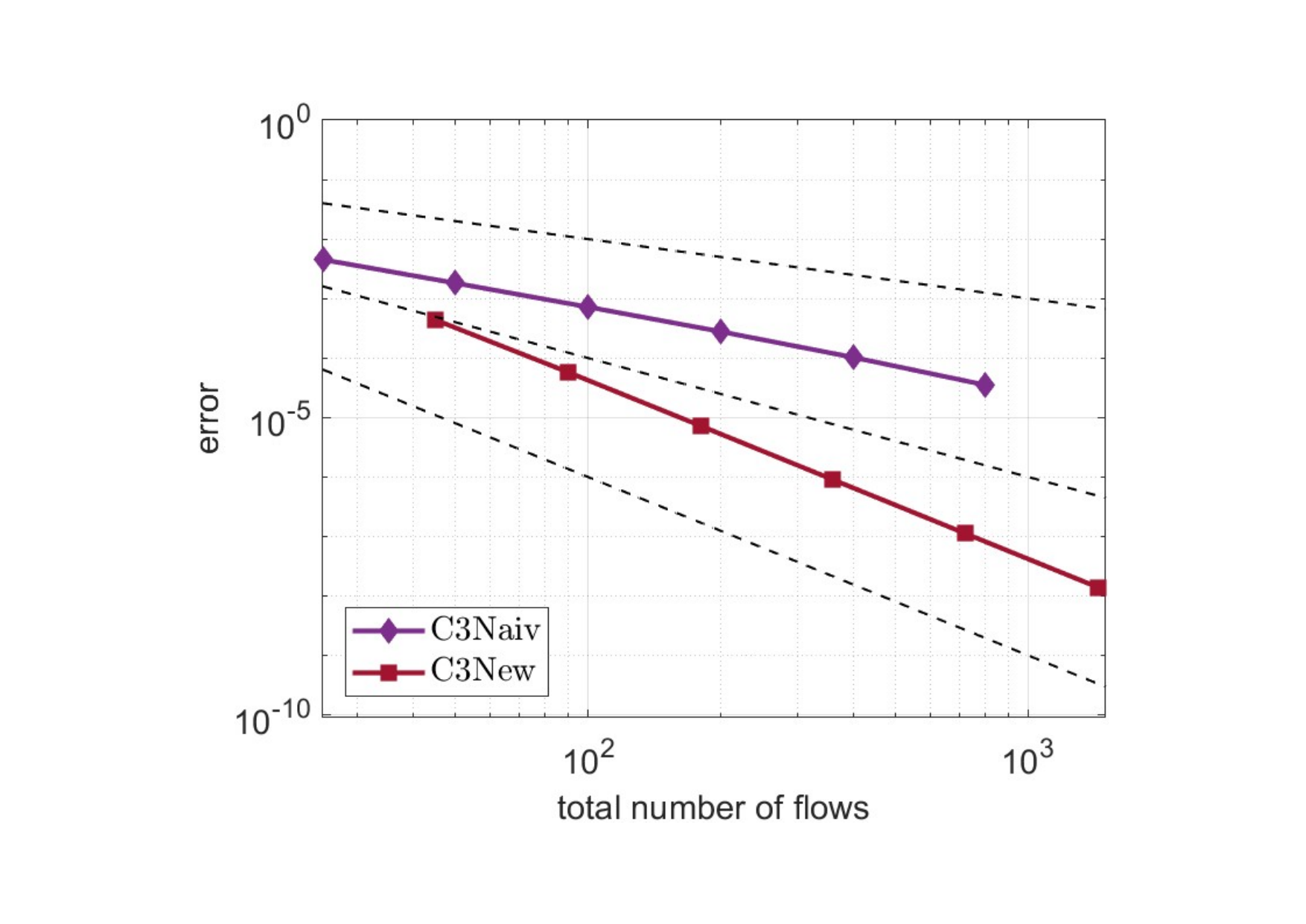}
\end{minipage}
\vspace*{1mm}
\caption{Convergence error of the splitting methods \emph{C3Naiv} and \emph{C3New} in depedence of the time step (left) and of the total number of flows (right), applied to problem \eqref{pde:fisher} for a cell proliferation rate $M=1$. Reference slopes of order one, two and three are given in dotted lines.
}
\label{fig:fisher}
\end{figure}

We compare in Figure \ref{fig:fisher} the convergence of the splitting methods \emph{C3Naiv} and \emph{C3New}. For $M=1$, we observe third order convergence of the modified splitting method \emph{C3New}, while the naive method \emph{C3Naiv} converges with order one and thus, suffers from an order reduction. Furthermore, \emph{C3New} has a better error constant compared to \emph{C3Naiv}. In the right picture, we note additionally that the new method \emph{C3New} requires less evaluations of the flows for a given precision and is therefore more accurate than the naive method \emph{C3Naiv}.

\begin{figure}[!tbp] 
\begin{minipage}[b]{0.49\textwidth}
\includegraphics[clip,trim=100 65 120 65,scale=0.36]{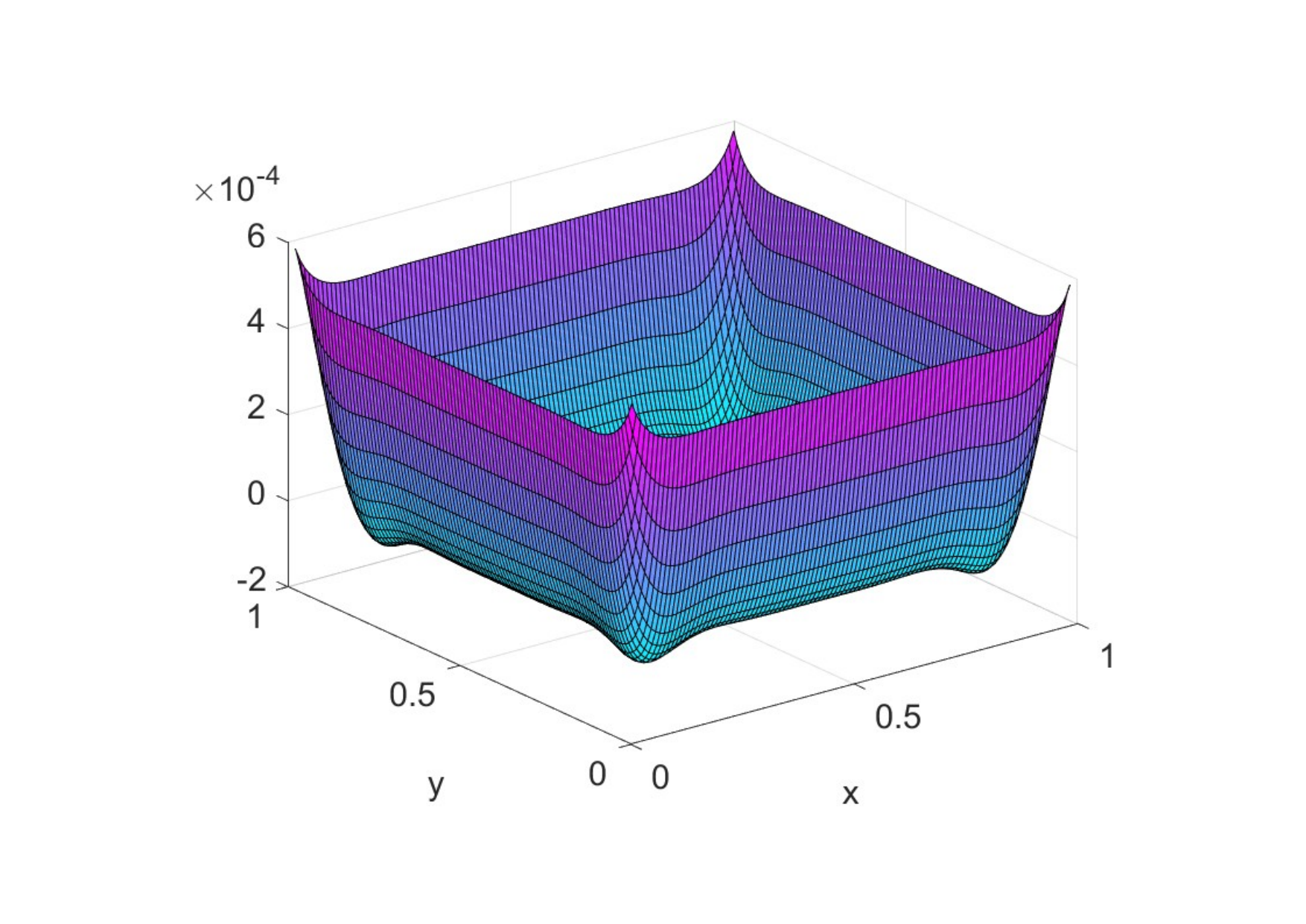}
\end{minipage}
\hspace{-10pt}
\begin{minipage}[b]{0.49\textwidth}
\includegraphics[clip,trim=100 65 120 65,scale=0.36]{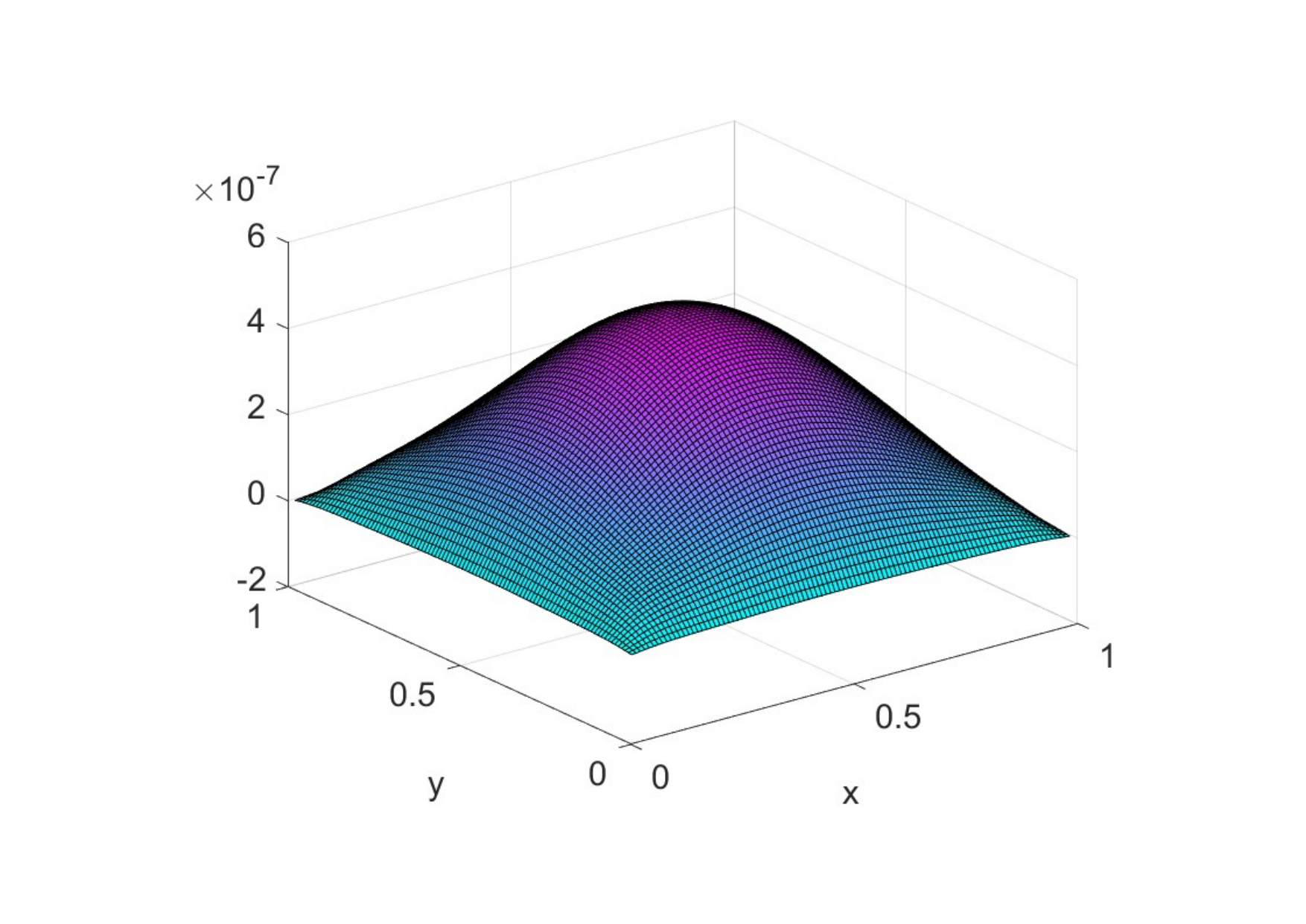}
\end{minipage}
\vspace*{1mm}
\caption{The pointwise error of the methods \emph{C3Naiv} (left) and \emph{C3New} (right) at final time $T=0.1$ applied to problem~\eqref{pde:fisher} in the domain $\Om = (0,1)^2$ for time step $\tau = 10^{-2}$. The error of the naive splitting is concentrated on the boundary $\partial\Om$, where the error of the new method vanishes.
}
\label{fig:fisher:err}
\end{figure}
In Figure~\ref{fig:fisher:err}, we plot the pointwise error in the domain $\Om$ of the naive method \emph{C3Naiv} and its modification \emph{C3New} for fixed time step $\tau=10^{-2}$. On the left picture, we recover that the error of \emph{C3Naiv} is concentrated on the boundary of the domain $\Om$, which was already observed in \cite{Han09b} for the Strang splitting method \emph{StrangNaiv}. In contrast, on the right picture, we note that the error of \emph{C3New} is reduced by three orders of magnitude (from $10^{-4}$ to $10^{-7}$) and is located in the interior of $\Om$ and vanishes on the boundary. 
Therefore, the observation we made in Figure~\ref{fig:indf_1d_2} for a solution-independent source term seems to persist in the context of a nonlinear $f$.

For the implementation of this experiment, we use the same final time $T$ and time steps $\tau, \tau_{ref}$ as in the first experiment. Furthermore, we choose the same discretization for the domain $\Om=(0,1)^2$ as in the second experiment.

\begin{remark} \label{rem_num} If we consider solution-dependent source terms $f=f(u)$, numerical differentiation is required to get boundary conditions~\eqref{r} and~\eqref{q}. In particular, again for $N+1=1/\Delta x$, we consider the uniform mesh $\wt{\Om}=(x_i,y_k)_{i,k=1}^n$ of $\Om$.  %$\Om_{i,k}=\{(x_i,y_k)\ |\ i,k \in \{1, \ldots N\}\}$, with boundary $\partial\Om_n = \Om_{0,k}\cup\Om_{N+1,k}\cup\Om_{i,0}\cup\Om_{i,N+1}$. 
Then, denoting $v_n^{(j)} =  \phi^f_{\tau_j}(\omega^{(j)}_n)-\omega^{(j)}_n$, to evaluate values on the boundary, we use the finite difference scheme
\begin{equation} \label{lapl_r}
\partial_{xx} v^{(j)}_n (0,y_k) \approx \frac{2v^{(j)}_n(0,y_k)-5v^{(j)}_n(x_1,y_k)+3v^{(j)}_n(x_2,y_k)-v^{(j)}_n(x_3,y_k)}{\Delta x^2}, \quad k=1, \ldots N,
\end{equation}
and analogousy for $\partial_{xx}v^{(j)}_n(1,y_k)$ as well as for $\partial_{yy}v^{(j)}_n(x_i,0)$ and $\partial_{yy}v^{(j)}_n(x_i,1), i=1, \ldots, N$. Additionally, we approximate $\omega^{(j)}_n \approx b$ on $\partial\Om$. Similarly to Remark~\ref{rem_numf}, we use finite differences to implement the Laplacian $\Delta = \partial_{xx}+\partial_{yy}$. Then, for $i, k = 1, \ldots, N$, we obtain the corrector functions $q^{(j)}_n$ by means of the identities
\begin{align*}
\Delta r_n^{(j)}(x_i,y_k)+\delta_{i1}\partial_{xx} v^{(j)}_n (0,y_k)+\delta_{iN}\partial_{xx} v^{(j)}_n (1,y_k)+\delta_{k1}\partial_{yy} v^{(j)}_n (x_i,0) +\delta_{kN}\partial_{yy} v^{(j)}_n (x_i,1)&=0
\end{align*}
and
\begin{align*}
\Delta q^{(j)}_n(x_i,y_k)+\delta_{i1} v^{(j)}_n (0,y_k)+\delta_{iN}v^{(j)}_n (1,y_k)+\delta_{k1}v^{(j)}_n (x_i,0) +\delta_{kN}v^{(j)}_n (x_i,1) &=r_n^{(j)}(x_i,y_k).
\end{align*}
\end{remark}

\paragraph{Solution-dependent boundary conditions (Figure~\ref{fig:autbc})} To emphasize that the new method \emph{C3New} performs well also for solution-dependent boundary conditions $b(u)$, we consider the following nonlinear problem in $\Om=(0,1)$,
\begin{equation} \label{pdetimeaut}
\begin{split}
\partial_tu(x,t) &= \partial_{xx}u(x,t)+ \cos(u(x,t)) \quad \text{in}\; \Om\times(0,T],  \\
u(0,t)&=\int^1_0 \rho(x)u(x,t)\;\text{d}x, \; u(1,t)=0 \quad  \text{in}\; (0,T],\\
u_0(x)&=\sin(2\pi x) \quad \text{in}\; \Om,
\end{split}
\end{equation}
\begin{figure}[!tbp] 
\begin{minipage}[b]{0.49\textwidth}
\includegraphics[clip,trim=100 60 120 60,scale=0.36]{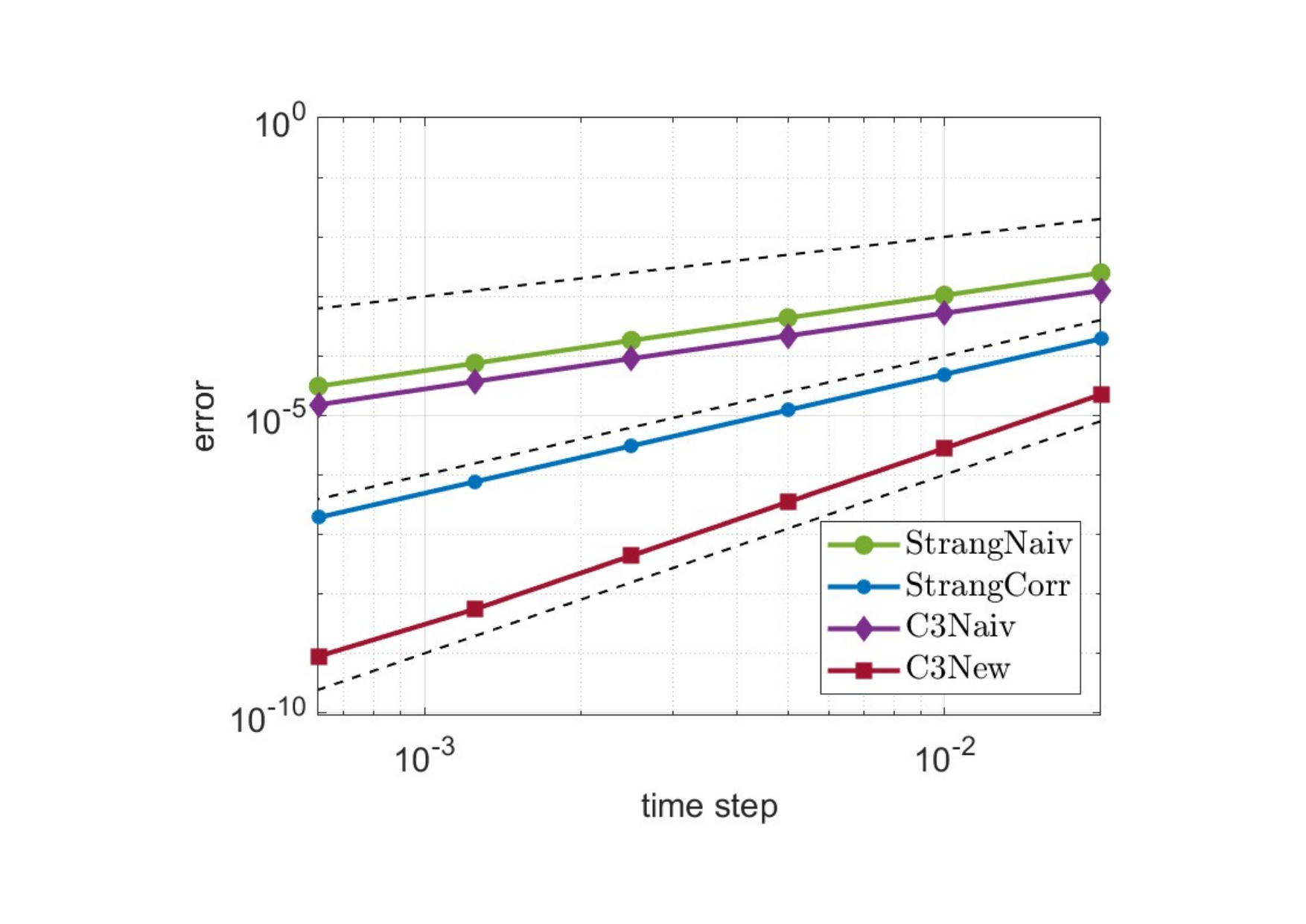}
\end{minipage}
\hspace{-10pt}
\begin{minipage}[b]{0.49\textwidth}
\includegraphics[clip,trim=100 60 120 60,scale=0.36]{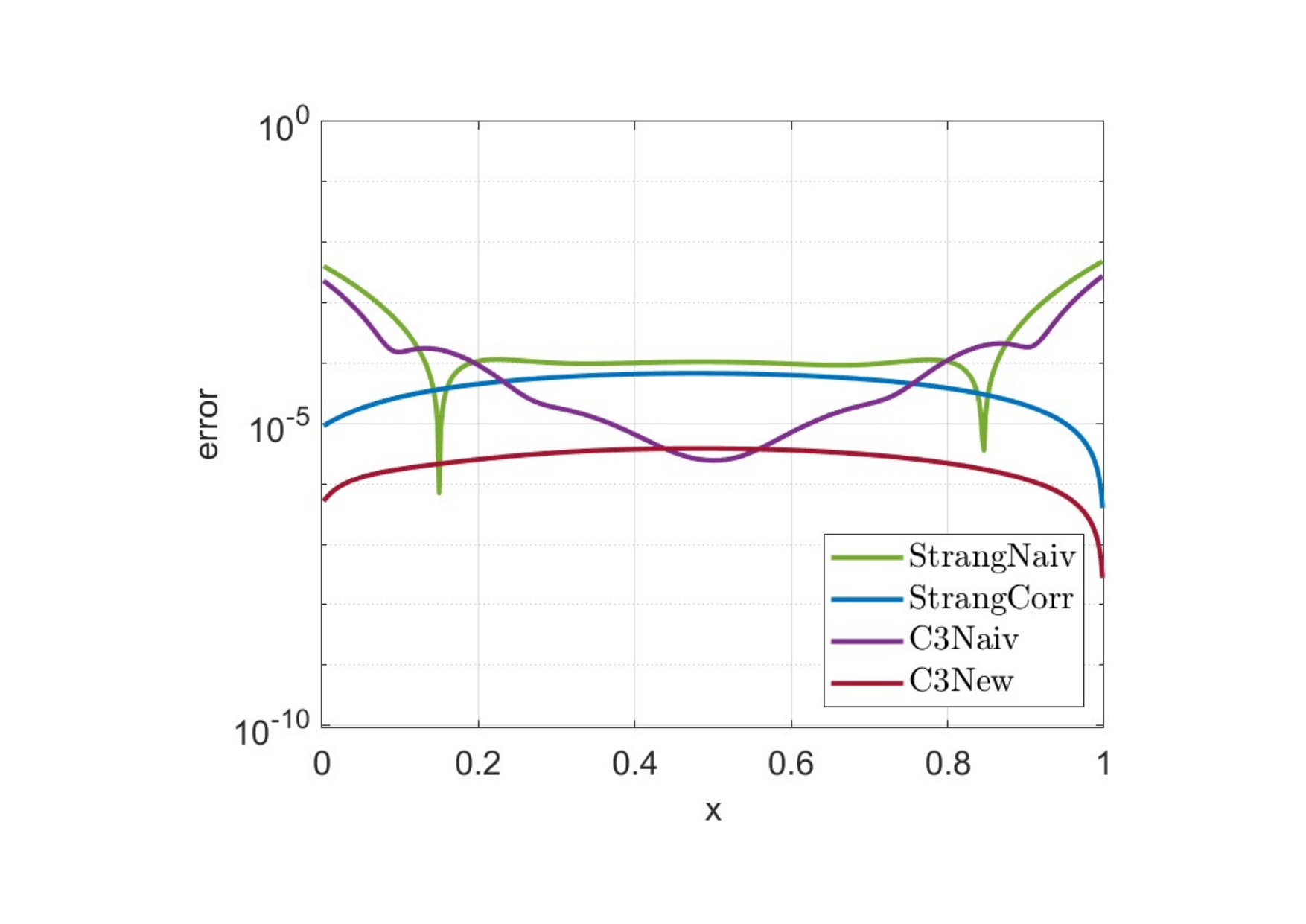}
\end{minipage}
\vspace*{1mm}
\caption{Convergence error of the corrected method \emph{C3New} applied to problem \eqref{pdetimeaut}. Comparison to the naive third order method \emph{C3Naiv}, the Strang splitting \emph{StrangNaiv} and its corrected version \emph{StangCorr}. Left: Error in terms of the time step $\tau$. Reference slopes of order one, two and three are given in dotted lines. Right: The pointwise error for time step $\tau = 10^{-2}$.
}
\label{fig:autbc}
\end{figure}
with $\rho(x)=-(x-1/2)^2+1/4$. In Figure~\ref{fig:autbc}, we compare the convergence of the corrected schemes \emph{StrangCorr}, \emph{C3New} with their naive versions \emph{StrangNaiv}, \emph{C3Naiv} respectively. We observe that the new method \emph{C3New} converges with full order three when integrating problem~\eqref{pdetimeaut} and thus does not suffer from an order reduction, in contrast to the naive schemes \emph{StrangNaiv} and \emph{C3Naiv}. Moreover, it has a better error constant compared to the other splitting methods we processed. Furthermore, as already observed for problem~\eqref{pdeindep}, we note that the error for the methods \emph{StrangNaiv} and \emph{C3Naiv} is maximal at the boundary of the domain, for fixed time step $\tau =10^{-2}$, see Figure~\ref{fig:autbc} (right). In contrast, the corrected methods \emph{StrangCorr} and \emph{C3New} are much more accurate on the boundary $\partial\Om$. We conclude that, even if the convergence analysis of Section~\ref{conv_anal} does not apply in the context of problem~\eqref{pdetimeaut}, the third order convergence of \emph{C3New} seems to persist for solution-dependent boundary conditions.

For the performance of this experiment, we use the same time parameters $\tau$ and $T$ as in the previous examples.  The flow of the source term $\cos(u)$ is obtained by the fourth order Runge Kutta method RK4 with time step $\tau_f = 0.02\cdot2^{-14} \approx 10^{-6}$. The domain $\Om = (0,1)$ is discretized in a uniform mesh with mesh size $\Delta x = 2\cdot10^{-3}$, we refer to Remark~\ref{rem_numf} for the notation. The Laplacian $\partial_{xx}$ was approximated by finite differences, while we use the trapezoidal rule to implement the solution of~\eqref{pdetimeaut} in $x=0$. For the corrector functions $q_n^{(j)}$, we set boundary conditions
\begin{equation*}
q_n^{(j)}(0) = v_n^{(j)}(0) - \Delta x \sum_{i=1}^N \rho(x_i)v_n^{(j)}(x_i) \quad \text{and} \quad q_n^{(j)}(1) = v_n^{(j)}(1),
\end{equation*}
where we denote $v_n^{(j)} =  \phi^f_{\tau_j}(\omega^{(j)}_n)-\omega^{(j)}_n$. Furthermore, we consider $r_n^{(j)}=\partial_{xx} v^{(j)}_n$ on $\partial\Om$, where we use the approximation~\eqref{lapl_r} without the dependence on $y_k$.
The reference solution corresponds to \emph{StrangNaiv}, with a very small time step $\tau_{ref} = 0.02\cdot2^{-15}$.

These numerical results suggest that the new method \emph{C3New} converges with order three also in the context of a solution-dependent source term $f$, and seems to avoid order reduction in general when applied to the class of semilinear parabolic problems~\eqref{pde}, even in the context of solution-dependent boundary conditions $b$. We now show numerically that the analysis from Section~\ref{conv_anal} seems to persist for order four by illustrating the convergence of the introduced method \emph{C4New}.

\paragraph{The fourth order splitting \emph{C4New} (Figure~\ref{fig:c4})} For the numerical performance of \emph{C4New}, we consider firstly a parabolic equation in the two-dimensional domain $\Om = (0,1)^2$, with linear source term $f(x,y)$ and homogeneous Dirichlet boundary conditions (Figure~\ref{fig:c4}, left),
\begin{figure}[!tbp]
\begin{minipage}[b]{0.49\textwidth}
\includegraphics[clip,trim=120 60 110 60,scale=0.36]{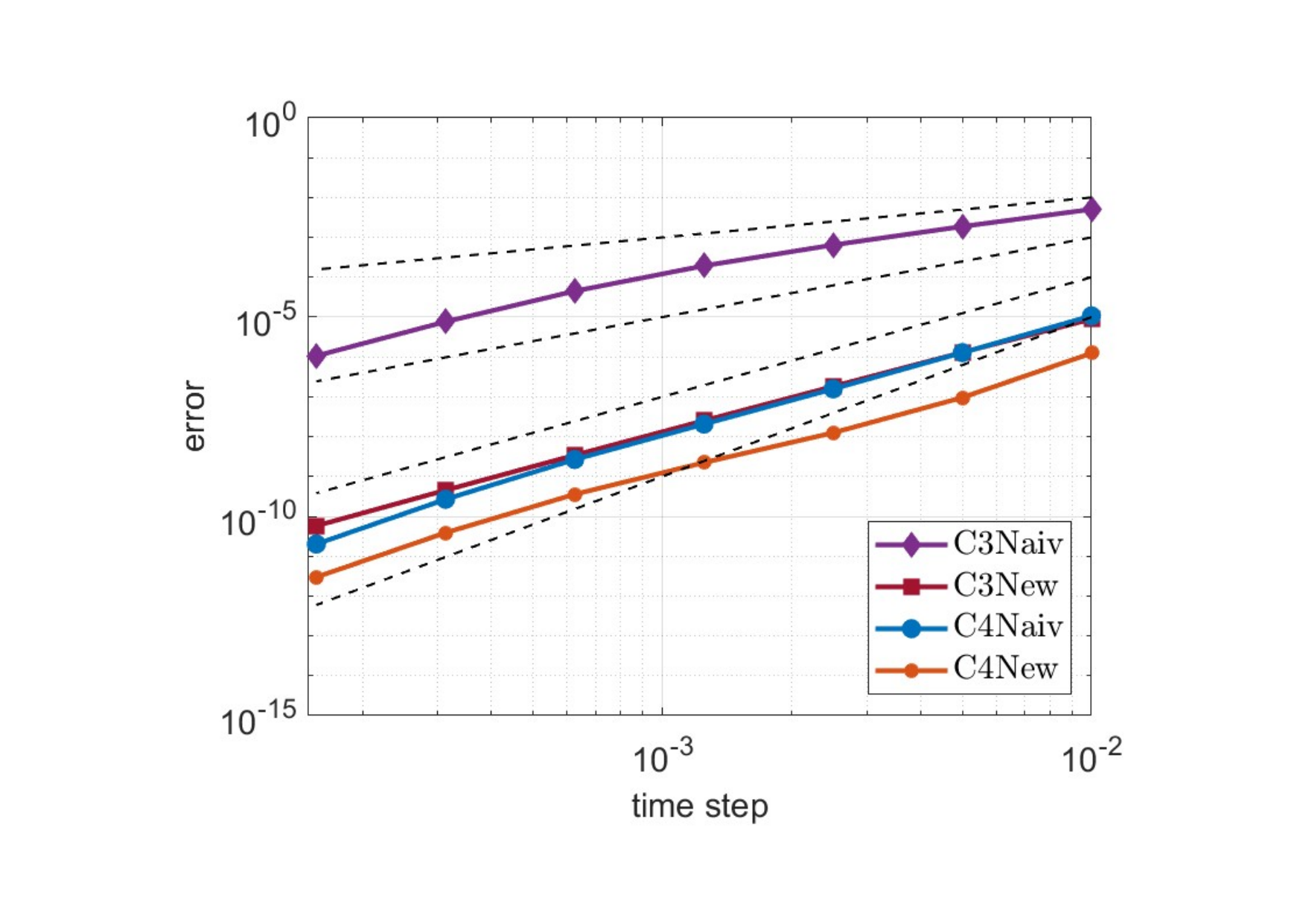}
\centering \small
linear source term $f$
\end{minipage}
\hspace{-10pt}
\begin{minipage}[b]{0.49\textwidth}
\includegraphics[clip,trim=120 60 110 60,scale=0.36]{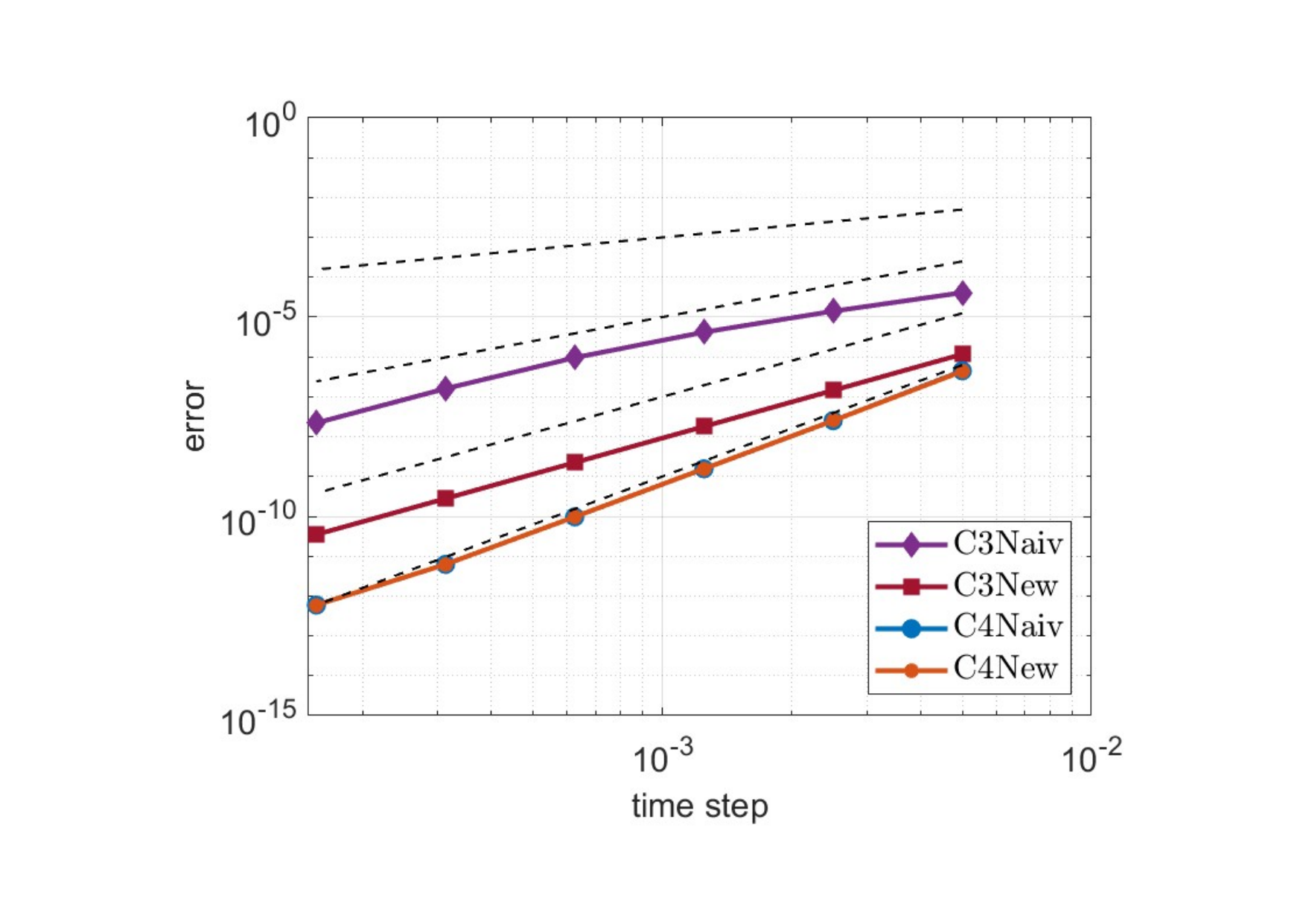}
\centering \small
nonlinear source term $f$
\end{minipage}
\vspace*{1mm}
\caption{Convergence error of the corrected method \emph{C4New} applied to problem \eqref{pde:c4} with solution-independent source function $f(x,y)=\cos(2\pi x)y^7+1$ in 2D (left) and to the Fisher equation~\eqref{pde:fisher} with nonlinearity $f(u(x))=u(x)(1-u(x))$ in 1D (right). Comparison to the naive method \emph{C4Naiv}, the splitting scheme \emph{C3Naiv} and its  corrected version \emph{C3New}. Reference slopes of order 1, 2, 3 and 4 are given in dotted lines.
}
\label{fig:c4}
\end{figure}
\begin{equation}
\begin{split}
\partial_tu(x,t) &= \Delta u(x,t)+ \cos(2\pi x)y^7+1 \quad \text{in}\; \Om\times(0,T], \\ 
u(x,t)&=0 \quad \text{on}\; \partial\Om\times(0,T] ,\\
u(x,0)&=\sin(2\pi x)\sin(2\pi y) \quad \text{in}\; \Om.
\label{pde:c4}
\end{split}
\end{equation}

We compare the convergence of the splitting methods \emph{C3Naiv} and \emph{C4Naiv} with their corrected versions \emph{C3New}, \emph{C4New} respectively. We observe that \emph{C4New} converges with order four, while its naive version \emph{C4Naiv} suffers from an order reduction and is not more accurate than \emph{C3New}. Around time step $\tau = 10^{-3}$, we note a switch of the error constant of \emph{C4New}, which is possibly caused by the transition from the ODE regime to the PDE regime. Compared to \emph{C3New}, the new method is 10 times more accurate. 

In a second experiment (Figure~\ref{fig:c4}, right), we illustrate the convergence of method \emph{C4New} for the Fisher-KPP equation~\eqref{pde:fisher} in $\Om =(0,1)$ and compare it to the splitting schemes \emph{C3Naiv}, \emph{C3New} and \emph{C4Naiv}. We note that the new methods \emph{C3New} and \emph{C4New} converge with full order three, four respectively. In this case, also the naive method \emph{C4Naiv} avoids order reduction and we obtain the same error constant as for \emph{C4New}. As already observed in dimension two (Figure~\ref{fig:fisher}), \emph{C3Naiv} suffers from an order reduction and converge with reduced order one.

Analogous to the previous experiments, we set the final time $T=0.1$ and choose time steps $\tau=~0.02\cdot~2^{-k}$ for $k \in \{1, \ldots, 7\}$. For the reference solution, we use the corrected method \emph{C3New} with a small time step $\tau_{ref} = 10^{-5}$. The one-dimensional domain $\Om = (0,1)$ as well as the two-dimensional square $\Om = (0,1)^2$ are discretized in a uniform mesh with mesh size $\Delta x = 2\cdot10^{-2}$. 

These numerical experiments suggest that the correction techniques, which where introduced in~\cite{Ber20, Ein15, Ein16} for order two, and in the present aricle for order three, can be generalized to construct higher order splitting methods which integrate in time the class of semilinear parabolic problems~\eqref{pde} without order reduction.

\section*{Conclusion}
We introduce a new splitting scheme what overcomes the order barrier two when integrating in time semilinear parabolic problems in the context of non-periodic boundary conditions. While we show third order convergence in the setting where the source term does not depend on the solution of the problem, numerical experiments suggest that the third order convergence remains true for nonlinear functions. Moreover, we observe numerically that the correction techniques introduced for order three, can be generalized to get a scheme of order four.\\
Such techniques might be generalized to compute splitting schemes of arbitrarily high order. Another direction of interest and topic of future work is to study high order splitting methods for hyperbolic problems, where the parabolic regularity properties are not available.

\section*{Acknowledgements}
The author would like to thank Gilles Vilmart for helpful discussions. This work was partially supported by the Swiss National Science
Foundation, projects No 200020\_214819, No. 200020\_184614, and
No. 200020\_192129.

% The next command determines the bibliography style. Please do not
% change this.
\bibliographystyle{plain}

%  This includes the bib file
\bibliography{references_arxiv}

\end{document}